\newtheorem{thm}{Theorem}[section]
\newtheorem{lem}[thm]{Lemma}
\newenvironment{remark}[1][Remark]{\begin{trivlist}
\item[\hskip \labelsep {\bfseries #1}]}{\end{trivlist}}
\newcommand{\lt}{{L^2}}
\newcommand{\htw}{{H^2}}
\newcommand{\Ho}{{H^1_0(\Omega)}}
\newcommand{\Htw}{{H^2(\Omega)}}
\newcommand{\eps}{\varepsilon}
\newcommand{\normd}[1]{\frac{\partial #1}{\partial \nu}}
\newcommand{\duel}{{(H^1_0\cap H^2)^*}}
\newcommand{\qnorm}{{H^{-\frac12}(\partial\Omega)}}
\newcommand{\spa}{\hspace{1cm}}
\newcommand{\um}{u^{\varepsilon,\mu}_h}
\newcommand{\ue}{u^\varepsilon}
\newcommand{\uh}{u^\varepsilon_h}
\newcommand{\vepsi}{{\varepsilon}}
\newcommand{\Ome}{{\Omega}}
\newcommand{\p}{{\partial}}
\newcommand{\Div}{{\mbox{\rm div}}}
\numberwithin{equation}{section}
\newtheorem{Remark}{Remark}[section]
\begin{document}

\title[GALERKIN METHODS FOR MONGE-AMP\`ERE EQUATIONS]{Analysis of
Galerkin Methods for the Fully Nonlinear Monge-Amp\`ere Equation$^\star$}

\author{Xiaobing Feng}
\address{Department of Mathematics, The University of Tennessee, Knoxville, TN 37996, U.S.A.
(xfeng@math.utk.edu).}

\author{Michael Neilan }
\address{
Department of Mathematics, The University of Tennessee, Knoxville, TN 37996, U.S.A.
(neilan@math.utk.edu). }

\thanks{$\star$This work was partially supported by the NSF grants DMS-0410266 and DMS-0710831.}

\keywords{
Fully nonlinear PDEs, Monge-Amp\`ere equation, moment solutions, vanishing moment method, viscosity
solutions, finite element methods, spectral Galerkin methods, Aygris element.
}

\subjclass{
65N30, 
65M60,  
35J60, 
53C45  
}

\begin{abstract}
This paper develops and analyzes finite element Galerkin and
spectral Galerkin methods for approximating viscosity solutions
of the fully nonlinear Monge-Amp\`ere equation
$\det(D^2u^0)=f \,(>0)$ based on the vanishing moment method
which was developed by the authors in \cite{Feng2,Feng1}. In this approach, the
Monge-Amp\`ere equation is approximated by the fourth
order quasilinear equation $-\varepsilon\Delta^2 u^\varepsilon +
\det{D^2u^\varepsilon} =f$ accompanied by appropriate boundary conditions.
This new approach allows one to construct convergent Galerkin numerical
methods for the fully nonlinear Monge-Amp\`ere equation
(and other fully nonlinear second order partial differential equations),
a task which has been impracticable before. In this paper,
we first develop some finite element and spectral Galerkin methods for
approximating the solution $u^\varepsilon$ of the regularized fourth
order problem. We then derive optimal order error
estimates for the proposed numerical methods. In particular,
we track explicitly the dependence of the error bounds on the
parameter $\vepsi$, for the error $u^\varepsilon-u^\varepsilon_h$.
Due to the strong nonlinearity of the underlying equation, the standard
perturbation argument for error analysis of finite element approximations
of nonlinear problems does not work here. To overcome
the difficulty, we employ a fixed point technique which strongly
makes use of the stability property of the linearized problem and
its finite element approximations.
Finally, using the Aygris finite element method as an example,
we present a detailed numerical study of the rates of convergence
in terms of powers of $\vepsi$ for the error $u^0-u_h^\vepsi$,
and numerically examine what is the ``best" mesh size $h$ in relation to $\vepsi$
in order to achieve these rates.
\end{abstract}

\maketitle

\section{Introduction}\label{sec-1}

Fully nonlinear partial differential equations (PDEs) are those equations
which are nonlinear in the highest order derivative(s) of the unknown function(s).
In the case of the second order equations, the general form of the fully nonlinear
PDEs is given by
\begin{equation}\label{generalPDE}
F(D^2 u^0, Du^0, u^0, x)=0,
\end{equation}
where $D^2 u^0(x)$ and $Du^0(x)$ denote respectively the Hessian
and the gradient of $u^0$ at $x\in \Omega\subset \mathbf{R}^n$.  $F$ is
assumed to be a nonlinear function in at least one of entries of $D^2 u^0$.
Fully nonlinear PDEs arise from many scientific and engineering
fields including differential geometry, optimal control, mass transportation,
geostrophic fluid, meteorology (cf. \cite{Caffarelli_Cabre95,Caffarelli_Milman99,
Gilbarg_Trudinger01,Fleming_Soner06,McCann_Oberman04} and the references therein).

In this paper we focus our attention on a prototypical fully nonlinear second
order PDE, the well-known Monge-Amp\`ere equation. Our goal is to develop
and analyze finite element and spectral Galerkin methods for approximating
viscosity solutions of the Dirichlet problem for the Monge-Amp\`ere equation
(cf. \cite{Gutierrez01}):
\begin{alignat}{2}\label{monge1}
\det(D^2 u^0)&=f &&\qquad\text{in }\Omega\subset \mathbf{R}^n,\\
\label{monge2} u^0&=g &&\qquad\text{on }\partial\Omega,
\end{alignat}
where $\Omega\subset \mathbf{R}^n$ is a convex domain with smooth or
piecewise smooth boundary $\p\Ome$. $\det(D^2 u^0(x))$ denotes the determinant
of $D^2 u^0(x)$.  Clearly, the Monge-Amp\`ere equation is a special case of
\eqref{generalPDE} with $F(D^2 u^0, Du^0, u^0, x)= \det(D^2 u^0)-f$.
We refer the reader to \cite{Benamou_Brenier00,Caffarelli_Milman99,
Gilbarg_Trudinger01,Gutierrez01} and the references therein for the
derivation, applications, and properties of the Monge-Amp\`ere equation.

For fully nonlinear second order PDEs, in particular for the Monge-Amp\`ere equation,
it is well-known that their Dirichlet problems do not have classical solutions
in general even $f$, $g$ and $\p\Ome$ are smooth, if $\Ome$ is not strictly convex.
(see \cite{Gilbarg_Trudinger01}). So it is imperative to develop some
weak solution theories for these problems. However, because of the
fully nonlinearity of these equations, unlike in the case of linear and quasilinear PDEs,
the usual weak solution theory based on integration by parts does not work for
fully nonlinear PDEs, hence, other ``nonstandard" weak solution theories
must be sought. In the case of the Monge-Amp\`ere equation,
the first such theory was due to A. D. Aleksandrov, who introduced a notion of
generalized solutions and proved the Dirichlet problem with $f>0$ has a unique
generalized solution in the class of convex functions (cf. \cite{Aleksandrov61},
also see \cite{Cheng_Yau77}). But major advances on analysis of problem
\eqref{monge1}-\eqref{monge2} has only been achieved many years later after the introduction
and establishment of the viscosity solution theory (cf. \cite{Caffarelli_Cabre95,
Crandall_Ishii_Lions92,Gutierrez01}). We recall that the notion of viscosity
solutions was first introduced by Crandall and Lions \cite{Crandall_Lions83}
in 1983 for the first order fully nonlinear Hamilton-Jacobi equations.
It was quickly extended to second order fully nonlinear PDEs, with dramatic
consequences in the wake of a breakthrough of Jensen's maximum
principle \cite{Jensen88} and the Ishii's discovery \cite{Ishii89}
that the classical Perron's method could be used to infer existence
of viscosity solutions. It should be noted that there also exist nonconvex
solutions to problem \eqref{monge1}-\eqref{monge2}, so if the convexity
requirement is dropped, solutions to problem \eqref{monge1}-\eqref{monge2} are
not unique. We shall refer this uniqueness property as {\em conditional uniqueness}.
Nonuniqueness or conditional uniqueness is often expected for the Dirichlet
problems of fully nonlinear second order PDEs. It should also be
noted that unlike in the case of fully nonlinear first order PDEs,
the terminology ``viscosity solution" loses its original meaning in the
case of fully nonlinear second order PDEs.

For the Dirichlet Monge-Amp\`ere problem \eqref{monge1}-\eqref{monge2}
with $f>0$, we recall that \cite{Gutierrez01} a convex function $u^0\in C^0(\overline{\Ome})$
satisfying $u^0=g$ on $\p \Ome$ is called a {\em viscosity subsolution (resp. viscosity
supersolution)} of \eqref{monge1} if for any $\varphi\in C^2$ there
holds $\det(D^2\varphi(x_0))\leq f(x_0)$ (resp. $\det(D^2\varphi(x_0))\geq f(x_0)$)
provided that $u^0-\varphi$ has a local maximum (resp. a local minimum)
at $x_0\in \Ome$. $u^0\in C^0(\overline{\Ome})$ is called a {\em viscosity solution}
if it is both a {\em viscosity subsolution} and a {\em viscosity supersolution}.
Form this definition we can see that the notion of viscosity
solutions is not variational and not defined using the more familiar
integration by parts approach which in fact is not possible because
of the fully nonlinearity of the PDE. On the other hand, the
strategy of shifting derivatives onto the test functions at extremal
points in the definition of viscosity solutions can be viewed as a
``{\em differentiation by parts}" approach. It can be shown that a
viscosity solution satisfies the PDE in the classical sense at every
point where the viscosity solution has second order continuous
derivatives. However, the theory does not tell what equations or
relations the viscosity solution satisfies at other points.
Numerically, this non-variational nature of the viscosity solution
theory poses a daunting challenge for computing viscosity solutions
because it makes it impossible for one to directly approximate
viscosity solutions using any Galerkin type numerical methods
including finite element, spectral and discontinuous Galerkin
methods, which all are based on variational formulations of PDEs. In
addition, it is extremely difficult (if all possible) to mimic
``{\em differentiation by parts}" approach at the discrete level, so
there seems have no hope to develop a discrete viscosity solution
theory.

To overcome the above difficulties, recently in \cite{Feng1,Feng2}
we introduced a new approach, called the vanishing moment method,
for establishing the existence of viscosity solutions for fully
nonlinear second order PDEs, in particular, for problem
\eqref{monge1}-\eqref{monge2}. In addition, this new approach gives
arise a new notion of weak solutions, called {\em moment solutions},
for fully nonlinear second order PDEs. Furthermore, the vanishing
moment method is constructive, so practical and convergent numerical
methods can be developed based on the approach for computing the
viscosity solutions of fully nonlinear second order PDEs such as
problem \eqref{monge1}-\eqref{monge2}. The main idea of the
vanishing moment method is to approximate a fully nonlinear second
order PDE by a quasilinear higher order PDE. The notion of moment
solutions and the vanishing moment method are natural
generalizations of the original definition of viscosity solutions
and the vanishing viscosity method introduced for the
Hamilton-Jacobi equations in \cite{Crandall_Lions83}. We now briefly
recall the definitions of moment solutions and the vanishing moment
method, and refer the reader to \cite{Feng1,Feng2} for a detailed
exposition.

Firstly, the vanishing moment method approximates the fully nonlinear
equation \eqref{generalPDE} by the following quasilinear fourth order PDE:
\begin{equation}\label{moment1}
-\vepsi\Delta^2 u^\eps+F(D^2 u^\eps, Du^\eps, u^\eps, x) =0\spa \ \ (\eps >0),
\end{equation}
which holds in domain $\Ome$. Suppose the Dirichlet boundary
condition $u^0=g$ is given on $\p\Ome$, then it is
natural to impose the same boundary condition on $u^\vepsi$, hence,
\begin{equation}\label{moment2}
u^\vepsi = g\qquad \text{on }\p\Ome.
\end{equation}
Secondly, in addition to boundary condition \eqref{moment2}
one more boundary condition must be imposed to ensure uniqueness of
solutions. In \cite{Feng1} we proposed to use one of the following boundary conditions:
\begin{equation} \label{bc2}
\Delta u^\vepsi=\vepsi,\quad \text{or}\quad
D^2 u^\vepsi\nu\cdot\nu=\vepsi \spa \text{on}\ \partial\Omega,
\end{equation}
where $\nu$ stands for the unit outward normal to $\p\Ome$. Although
both boundary conditions work well numerically, the first boundary
condition is more convenient for standard Galerkin type methods such
as finite methods, spectral and discontinuous Galerkin methods,
while the second boundary condition fits mixed finite element
methods (cf. \cite{Feng1,Feng3}) better. Hence, in this paper we
shall use the first boundary condition. We also refer the reader to
\cite{Feng2} for the heuristic argument why these boundary
conditions were chosen in the first place.

To sum up, the vanishing moment method consists of approximating
second order boundary value problem
\eqref{monge2}--\eqref{generalPDE} by fourth order boundary value
problem \eqref{moment1}--\eqref{moment2}, \eqref{bc2}. In the case
of the Monge-Amp\`ere equation, this then results in approximating
boundary value problem \eqref{monge1}--\eqref{monge2} by the
following problem:
\begin{alignat}{2}\label{pdeequation1}
-\vepsi\Delta^2 u^\eps +\text{det}(D^2 u^\eps)&=f \spa &&\text{in}\ \Omega,\\
u^\eps&=g\spa &&\text{on}\ \partial\Omega, \label{pdeequation2} \\
\Delta u^\eps&=\eps &&\text{on}\ \partial\Omega.
\label{pdeequation3}
\end{alignat}
It was proved in \cite{Feng1} that if $f\geq 0$ in $\Ome$ then
problem \eqref{pdeequation1}--\eqref{pdeequation3} has a unique
solution $u^\vepsi$ which is a convex function over $\Ome$.
Moreover, $u^\vepsi$ uniformly converges as $\vepsi\to 0$ to the
unique viscosity solution of \eqref{monge1}--\eqref{monge2}. As a
byproduct, this also shows that \eqref{monge1}--\eqref{monge2} has
a unique moment solution which coincides with the unique viscosity
solution. Furthermore, it was proved that there holds the following
a priori bounds which will be used frequently later in this paper:
\begin{equation}\label{momentbounds}
\|u^\eps\|_{H^j}=O\bigl(\eps^{-\frac{j-1}{2}}\bigr), \quad
\|u^\eps\|_{W^{2,\infty}}=O\bigl(\frac{1}{\eps}\bigr), \quad
\|\text{cof}(D^2u^\eps)\|_{L^\infty}=O\bigl(\frac{1}{\eps}\bigr)
\end{equation}
for $j=2,3$. Where $\text{cof}(D^2u^\eps)$ denotes the cofactor matrix of the
Hessian $D^2u^\eps$.
With the help of the vanishing moment method, the
difficult task of computing the unique convex viscosity solution
of the fully nonlinear second order Monge-Amp\`ere
problem \eqref{monge1}--\eqref{monge2}, which has multiple
solutions (i.e. there are non-convex solutions), is now reduced to
a feasible task of computing the unique regular solution of the
quasilinear fourth order problem
\eqref{pdeequation1}--\eqref{pdeequation3}. In particular,
this allows one to use and/or adapt the wealthy amount of existing
numerical methods, in particular, finite element methods to solve
problem \eqref{monge1}--\eqref{monge2} via problem
\eqref{pdeequation1}--\eqref{pdeequation3}.

The specific goal of this paper is to develop and analyze Galerkin
methods for approximating the solution of \eqref{pdeequation1}--\eqref{pdeequation3}
in $2$-D and $3$-D. When deriving error estimates of
the proposed numerical methods, we are particularly interested in
obtaining error bounds that show explicit dependence on $\vepsi$.
Aygris confirming finite element method in $2$-D and Legendre
spectral Galerkin method in both $2$-D and $3$-D are specifically
considered in the paper although our analysis applies to any conforming
Galerkin method for problem \eqref{pdeequation1}--\eqref{pdeequation3}.
We note that finite element approximations of fourth order PDEs, in particular, the
biharmonic equation, were carried out extensively in 1970's in the
two-dimensional case (see \cite{Ciarlet78} and the references
therein),and have attracted renewed interests lately for
generalizing the well-know $2$-D finite elements to the $3$-D case
(cf. \cite{Wang_Shi_Xu07,Wang_Xu07,Tai_Wagner00}) and for
developing discontinuous Galerkin methods in all dimensions (cf.
\cite{Feng_Karakashi07,Suli03}). Clearly, all these methods can be
readily adapted to discretize problem \eqref{pdeequation1}--\eqref{pdeequation3}
although their convergence analysis do not come easy due to the strong
nonlinearity of the PDE \eqref{pdeequation1}. Also, the standard
perturbation technique for deriving error estimates for numerical
approximations of mildly nonlinear problems does not work for
problem \eqref{pdeequation1}--\eqref{pdeequation3}.  We refer the reader
to \cite{Feng3,Neilan_thesis} for further discussions in this direction.

We also like to mention that a few results on numerical approximations of the
Monge-Amp\`ere equation as well as related equations have recently been 
reported in the literature.  Oliker and Prussner \cite{Oliker_Prussner88}
constructed a finite difference scheme for computing Aleksandrov measure induced
by $D^2u$ in $2$-D and obtained the solution $u$ of problem
\eqref{pdeequation1}--\eqref{pdeequation3} as a by-product.
Baginski and Whitaker \cite{Baginski_Whitaker96}
proposed a finite difference scheme for Gauss curvature equation
(cf. \cite{Feng2} and the references therein) in $2$-D by mimicking the unique
continuation method (used to prove existence of the PDE) at the discrete level.
In a series of papers (cf. \cite{Dean_Glowinski06b} and the references therein)
Dean and Glowinski proposed an augmented Lagrange multiplier method
and a least squares method for problem \eqref{pdeequation1}--\eqref{pdeequation3}
and the Pucci's equation (cf. \cite{Caffarelli_Cabre95,Gilbarg_Trudinger01}) in $2$-D
by treating the Monge-Amp\`ere equation and Pucci's equation as a constraint
and using a variational criterion to select a particular solution.
Very recently, Oberman \cite{Oberman07} constructed some wide stencil finite
difference scheme which fulfill the convergence criterion established by Barles
and Souganidis in \cite{Barles_Souganidis91} for finite difference approximations of
fully nonlinear second order PDEs. Consequently, the convergence
of the proposed wide stencil finite difference scheme immediately follows
from the general convergence framework of \cite{Barles_Souganidis91}.
Numerical experiments results were reported in \cite{Oliker_Prussner88,Oberman07,
Baginski_Whitaker96,Dean_Glowinski06b}, however, convergence analysis was not
addressed except in \cite{Oberman07}.

The remainder of this paper is organized as follows. In Section
\ref{sec-2}, we first derive the weak formulation for problem
\eqref{pdeequation1}-\eqref{pdeequation3} and then present our
confirming finite element and spectral Galerkin methods based on
this weak formulation. In Section \ref{sec-3}, we study the
linearization of problem \eqref{pdeequation1}--\eqref{pdeequation3}
and its Galerkin approximations. The results of this section, which
are of independent interests in themselves, will play an important
role in our error analysis for the numerical method introduced in
Section \ref{sec-2}. In Section \ref{sec-4}, we establish optimal
order error estimates in the energy norm for the proposed confirming
finite element and spectral Galerkin methods. Our main ideas are to
use a fixed point technique and to make strong use of the stability
property of the linearized problem which is analyzed in Section
\ref{sec-3}. In addition, we derive the optimal order error estimate
in the $L^2$-norm for $u^\vepsi-u^\vepsi_h$ using a duality
argument. Finally, in Section \ref{sec-5}, we first run some numerical
tests to validate our theoretical error estimate results. We then
present a detailed computational study for determining the ``best''
choice of mesh size $h$ in terms of $\varepsilon$ in order to
achieve the optimal rates of convergence and for estimating the
rates of convergence for both $u^0-u^\eps_h$ and $u^0-u^\eps$ in
terms of powers of $\eps$.

\section{Formulation of Galerkin Methods}\label{sec-2}
Standard space notations are adopted in this paper, we refer to
\cite{Brenner,Gilbarg_Trudinger01,Ciarlet78} for their exact
definitions. In addition, $\Ome$ denotes a bounded domain in
$\mathbf{R}^n$. $(\cdot,\cdot)$ and $\langle\cdot, \cdot\rangle$
denote the $L^2$-inner products on $\Ome$ and on $\p\Ome$,
respectively. $C$ is used to denote a generic $\vepsi$-independent
positive constant.  We also introduce the following special space
notation:
\[
V:=H^2(\Omega),\spa V_0:=H^2(\Omega)\cap H^1_0(\Omega),\spa
V_g:=\{v\in V;\ v|_{\partial\Omega}=g\}.
\]

Testing \eqref{pdeequation1} with $v\in V_0$ yields
\begin{equation}\label{variational1}
-\eps\int_\Omega \Delta u^\eps\Delta
v dx +\int_\Omega \text{det}(D^2u^\eps)v dx=\int_\Omega fv
dx-\int_{\partial\Omega}\eps^2\normd{v}ds.
\end{equation}
Based on \eqref{variational1}, we define the variational
formulation of \eqref{pdeequation1}--\eqref{pdeequation3} as follows:
Find $u^\eps\in V_g$ such that
\begin{equation}\label{variational2}
-\eps(\Delta u^\eps,\Delta v)+(\text{det}(D^2u^\eps),v)=(f,v)-\left\langle
\eps^2,\normd{v}\right\rangle_{\partial\Omega}\spa \forall v\in V_0.
\end{equation}

\begin{Remark}
We note that $\text{\rm det}(D^2 u^\eps)=\frac1n \Phi^\eps
D^2u^\eps=\frac1n\displaystyle\sum_{i=1}^n \Phi^\eps_{ij}u_{x_ix_j}\
\ j=1,2,...n.$, where $\Phi^\eps$ is the cofactor matrix of $D^2
u^\eps$. Thus, using the divergence free property of the cofactor
matrix $\Phi^\eps$ (cf. Lemma \ref{lem3.1}) we can define the
following alternative variational formulation for
\eqref{variational2}.
\begin{align}\label{altvar}-\eps(\Delta u^\eps,\Delta
\psi)-\frac1n(\Phi^\eps Du^\eps,D\psi)&=\langle f,\psi\rangle -
\left\langle \eps^2,\normd{\psi}\right\rangle_{\partial\Omega}\spa
\forall \psi\in V_0.\end{align} However, we shall not use the above
weak formulation in this paper although it is interesting to compare
Galerkin methods based on the two different but equivalent weak
formulations.
\end{Remark}

In this paper, we shall consider two types of Galerkin
approximations for \eqref{variational2}. The first type is the
confirming finite element Galerkin method in $2$-D.  Aygris finite
element will be used as a specific example of this class of methods
although our analysis is applicable to other confirming finite
element such as Bell element, Bogner--Fox--Schmit element, and
Hsieh--Clough--Tocher element (cf. \cite{Ciarlet78}). The second
type of the methods is the spectral Galerkin method in $2$-D and
$3$-D. Legendre spectral Galerkin method will be used as a specific
example although our analysis also applies to other spectral methods
(with necessary assumptions on the domain in the case of Fourier 
spectral method).

To formulate the finite element method in $2$-D, let $T_h$ be a
quasiuniform triangular or rectangular partition of
$\Ome \subset \mathbf{R}^2$ with mesh size $h\in (0,1)$ and $V^h\subset
V$ denote a confirming finite element space consisting of piecewise
polynomial functions of degree $r (\geq 5)$ such that for any
$v\in V\cap H^s(\Ome)$
\begin{align}
\inf_{v_h\in V^h} \|v-v_h\|_{H^j} \leq h^{\ell-j} \|v\|_{H^s},\quad
j=0,1,2;\, \ell=\min\{r+1,s\}.
\end{align}
We recall that $r=5$ in the case of Aygris element (cf. \cite{Ciarlet78}).

Let
\begin{align*}
V^h_g=\{v_h\in V^h;\ v_h|_{\partial\Omega}=g\},\spa
V^h_0=\{v_h\in V^h;\ v_h|_{\partial\Omega}=0\}.
\end{align*}
Based on the weak formulation \eqref{variational2}, we define our
finite element Galerkin method as follows. Find $u^\eps_h\in V^h_g$ such that
\begin{equation}\label{galmethod}
-\eps\bigl(\Delta u^\eps_h,\Delta v_h\bigr)+\bigl(\text{det}(D^2u^\eps_h),v_h\bigr)
=(f,v_h)-\left\langle \eps^2,\normd{v_h}\right\rangle_{\partial\Omega}\quad
\forall v_h\in V^h_0.
\end{equation}

To formulate the spectral Galerkin method, we assume that $\Ome$ is a
rectangular domain and let $L_j$ denote the
$j$th order Legendre polynomial of single variable and define the
following finite dimensional spaces: For $N\geq 2$, let
\begin{align*}
&V^N:=\text{span}\{L_0(x_1),L_2(x_1), \cdots, L_N(x_1)\}\quad\text{when } n=1,\\
&V^N:=\text{span}\{L_i(x_1) L_j(x_2);\, i,j=1,2,\cdots,N\} \quad\text{when } n=2,\\
&V^N:=\text{span}\{L_i(x_1) L_j(x_2) L_k(x_3);\, i,j,k=1,2,\cdots,N\}
\quad\text{when } n=3.
\end{align*}
It is well-known that $V^N$ has the following approximation property
(cf. \cite{Bernardi_Maday97}):
\begin{align}
\inf_{v_N\in V^N} \|v-v_N\|_{H^j} \leq CN^{j-s}
\|v\|_{H^s},\quad 0\leq j\leq \min\{s, N+1\}
\end{align}
for any $v\in V\cap H^s(\Ome)$.

Again, based on the weak formulation \eqref{variational2}, our spectral
Galerkin method for \eqref{variational2} is defined as
seeking $u^\vepsi_N\in V^N\cap V_g$ such that for any $v_N\in V^N\cap V_0$
\begin{equation}\label{spectralmethod}
-\eps\bigl(\Delta u^\eps_N,\Delta v_N\bigr)+\bigl(\text{det}(D^2u^\eps_N),v_N\bigr)
=(f,v_N)-\left\langle \eps^2,\normd{v_N}\right\rangle_{\partial\Omega}.
\end{equation}

Clearly, Galerkin methods \eqref{galmethod} and
\eqref{spectralmethod} have the exact same form as the variational
problem \eqref{variational2}. The only difference is that the
infinite dimensional space $V$ in \eqref{variational2} is replaced
by the finite dimensional subspace $V^h$ and $V^N$, respectively.
Letting $h:=\frac{1}{N}$, $u^h:=u^N$ and $V^h:=V^N$ in the
definition of the spectral method, \eqref{spectralmethod} can be
rewritten as the exactly same form as \eqref{galmethod}. For this
reason, we shall abuse the notation by using $u^h$ to denote the
solution of \eqref{galmethod} and the solution of
\eqref{spectralmethod} with understanding that $h=\frac{1}{N}$.
Since the convergence analyses for \eqref{galmethod} and for
\eqref{spectralmethod} are essentially same, we shall only present
the detailed analysis for \eqref{galmethod} and make comments about
that for \eqref{spectralmethod} in case there is a meaningful
difference.

Let $u^\eps$ be the solution to \eqref{variational2} and
$u^\eps_h$ the solution of \eqref{galmethod} or \eqref{spectralmethod}.
As mentioned in Section \ref{sec-1}, the main task of this paper is to derive
optimal error estimates for $u^\eps-u^\eps_h$. To this end, we
first need to prove existence and uniqueness of $u^\eps_h$. It
turns out both tasks are not easy due to the strong
nonlinearity in \eqref{galmethod} and \eqref{spectralmethod}.
Unlike the continuous PDE case where $u^\vepsi$ is proved to be convex
for all $\vepsi$ (cf. \cite{Feng2}), it is not clear whether $u^\eps_h$
preserves the convexity even for small $h$. Without a guarantee of
convexity for $u^\eps_h$, we could not establish any stability
result for $u^\eps_h$. This is the main obstacle for proving existence and
uniqueness for \eqref{galmethod} and \eqref{spectralmethod}. In addition,
again due to the strong nonlinearity, the standard perturbation technique
for deriving error estimate for numerical approximations of mildly
nonlinear problems does not work here. To overcome the difficulty, our
idea is to adopt a combined fixed point and linearization technique
which was used by the authors in \cite{Feng_Neilan_Prohl07}, where
a nonlinear singular second order problem known as the inverse mean
curvature flow was studied. We note that by using this technique we are
able to simultaneously prove existence and uniqueness for $u^\eps_h$ and
also derive the desired error estimates. In the next two sections,
we shall give the detailed account of this technique and apply it
to problems \eqref{galmethod} and \eqref{spectralmethod}.

\section{Linearization and its Finite Element Approximation}\label{sec-3}

To analyze \eqref{galmethod} and \eqref{spectralmethod}, we
shall study the linearization of \eqref{pdeequation1} to
establish the required technical tools. First, we recall the following
divergence-free row property for the cofactor matrices, which will
be used frequently in later sections.  We refer the reader to
\cite[p. 440]{evans} for a short proof of the lemma.
\begin{lem}\label{lem3.1}
Given a vector-valued function $\mathbf{v}=(v_1,v_2,\cdots,v_n):
\Ome\rightarrow \mathbb{R}^n$. Assume $\mathbf{v}\in
[C^2(\Ome)]^n$. Then the cofactor matrix $\text{\rm
cof}(D\mathbf{v})$ of the gradient matrix $D\mathbf{v}$ of
$\mathbf{v}$ satisfies the following row divergence-free property:
\begin{equation}\label{e3.1}
\Div (\text{\rm cof}(D\mathbf{v}))_i =\sum_{j=1}^n \p_{x_j}
(\text{\rm cof}(D\mathbf{v}))_{ij} =0 \qquad\text{\rm for }
i=1,2,\cdots, n,
\end{equation}
where $(\text{\rm cof}(D\mathbf{v}))_i$ and $(\text{\rm
cof}(D\mathbf{v}))_{ij}$ denote respectively the $i$th row and the
$(i,j)$-entry of $\text{\rm cof}(D\mathbf{v})$.
\end{lem}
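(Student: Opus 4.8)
The plan is to establish \eqref{e3.1} by a direct computation: express each entry of $\text{cof}(D\mathbf{v})$ as a signed sum of products of first partial derivatives of the components $v_k$, differentiate, sum over $j$, and exploit the clash between the antisymmetry of the permutation sign and the symmetry of mixed second derivatives provided by the hypothesis $\mathbf{v}\in[C^2(\Omega)]^n$.

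First I would fix notation. Writing $A=D\mathbf{v}$ so that $A_{kj}=\partial_{x_j}v_k$, recall the Levi-Civita representation of the cofactor,
\[
(\text{cof}(D\mathbf{v}))_{ij}=\frac{1}{(n-1)!}\sum \epsilon_{i\,i_2\cdots i_n}\,\epsilon_{j\,j_2\cdots j_n}\,\partial_{x_{j_2}}v_{i_2}\cdots\partial_{x_{j_n}}v_{i_n},
\]
where $\epsilon$ is the Levi-Civita symbol and the sum runs over $i_2,\dots,i_n$ and $j_2,\dots,j_n$ in $\{1,\dots,n\}$; equivalently, $(\text{cof}(D\mathbf{v}))_{ij}$ is $(-1)^{i+j}$ times the $(n-1)\times(n-1)$ minor of $D\mathbf{v}$ obtained by deleting row $i$ and column $j$. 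Either description makes clear that this quantity is a polynomial in the $\partial_{x_\ell}v_k$ in which, for fixed $i$ and $j$, the column index $j$ itself does not occur among the differentiation indices.

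Next I would differentiate. By the product rule, $\partial_{x_j}(\text{cof}(D\mathbf{v}))_{ij}$ is a sum over $k\in\{2,\dots,n\}$ of terms obtained from the expansion above by replacing the factor $\partial_{x_{j_k}}v_{i_k}$ with $\partial_{x_j}\partial_{x_{j_k}}v_{i_k}$. Summing over $j=1,\dots,n$ and isolating a single such term, one meets the contraction $\sum_{j,\,j_k}\epsilon_{j\,j_2\cdots j_k\cdots j_n}\,\partial_{x_j}\partial_{x_{j_k}}v_{i_k}$ with the remaining indices held fixed: the factor $\partial_{x_j}\partial_{x_{j_k}}v_{i_k}$ is symmetric under the interchange $j\leftrightarrow j_k$ since $v_{i_k}\in C^2(\Omega)$, whereas $\epsilon_{j\,j_2\cdots j_k\cdots j_n}$ is antisymmetric under that interchange, so the double contraction vanishes. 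As every term of the sum is annihilated in this way, $\sum_{j=1}^n\partial_{x_j}(\text{cof}(D\mathbf{v}))_{ij}=0$ for each $i$, which is \eqref{e3.1}.

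I do not anticipate a genuine obstacle; the only delicate point is the index bookkeeping in the cofactor expansion, and even that can be bypassed here, since only $n=2$ and $n=3$ are needed in this paper and the assertion then reduces to two, respectively three, scalar identities that may be verified by writing out $\text{cof}(D\mathbf{v})$ explicitly — for instance, for $n=2$ the first row gives $\partial_{x_1}(\partial_{x_2}v_2)+\partial_{x_2}(-\partial_{x_1}v_2)=0$. The short proof cited from \cite[p.~440]{evans} proceeds along this coordinate-explicit line.
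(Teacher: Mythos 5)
Your argument is correct: the Levi-Civita representation of the cofactor is right, and the vanishing of the contraction of the antisymmetric symbol against the (by $C^2$) symmetric second derivatives $\partial_{x_j}\partial_{x_{j_k}}v_{i_k}$ gives \eqref{e3.1}. The paper itself gives no proof, deferring to \cite[p.~440]{evans}, and your computation is essentially the standard permutation-sign argument of that cited source (with the low-dimensional explicit check as a safe fallback), so there is nothing to add.
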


\subsection{Linearization}

It is easy to check that for a given smooth function $w$ there holds
\begin{equation}
\text{det}(D^2u^\eps+tw)=\text{det}(D^2 u^\eps)
+t\text{tr}(\Phi^\eps D^2 w)+\cdots +t^n\text{det}(D^2 w).
\end{equation}
The linearization of $M^\eps(u^\vepsi):=-\eps\Delta^2u^\eps+\text{det}(D^2u^\eps)$
at the solution $u^\eps$ is given by
\begin{align*}
L_{u^\eps}(w):&=\lim_{t\to 0} \frac{M^\eps(u^\vepsi+tw) - M^\vepsi(u^\vepsi)}{t}\\
&= -\eps\Delta^2 w + \Phi^\eps: D^2w =-\eps\Delta^2 w +
\text{div}(\Phi^\eps Dw),
\end{align*}
where $\Phi^\eps$ denotes the cofactor matrix of $D^2u^\vepsi$ and
we have used Lemma \ref{lem3.1} to get the last equality. Also,
using Lemma \ref{lem3.1} it is easy to check that $L_{u^\eps}$ is
self-adjoint, i.e., the adjoint operator $L_{u^\eps}^*$ of $L_{u^\eps}$
coincides with $L_{u^\eps}$.

We now consider the following linear problem.
\begin{alignat}{2}\label{lin}
L_{u^\varepsilon}(v)&=\varphi  &&\quad \text{in}\ \Omega,\\
v&=0  &&\quad \text{on}\ \partial\Omega, \label{lin2}\\
\Delta v&=q &&\quad \text{on}\ \partial\Omega.\label{lin3}
\end{alignat}
Multiplying the PDE by a test function $w\in V_0$ and integrating over $\Omega$ we
get the following weak formulation for \eqref{lin}--\eqref{lin3}:
Find $v\in V_0$ such that
\begin{equation}\label{weaklin}
B[v,w]=\langle \varphi,w\rangle
-\eps\left\langle q,\normd{w}\right\rangle_{\partial\Omega}\spa \forall w\in V_0,
\end{equation}
where
\[
B[v,w]:=\eps\int_{\Omega}\Delta v\Delta w\ dx
+ \int_{\Omega} \Phi^\eps Dv\cdot Dw\ dx.
\]

The next theorem ensures the well-posedness of the above variational problem.
\begin{thm}\label{linbound}
Suppose $\partial\Omega\in \mathcal{C}^2$. Then for every
$\varphi\in V^*_0$ and $q\in \qnorm$ there exists a unique solution $v\in V_0$
to problem \eqref{weaklin}.
Furthermore, there exists a positive constant $C_1(\eps)$ such that
\begin{equation}\label{estimate}
\|v\|_{H^{2}}\le C_1(\eps)\left(\|\varphi\|_\duel+\|q\|_\qnorm\right).
\end{equation}
\end{thm}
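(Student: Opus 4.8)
The plan is to establish \eqref{estimate} via the Lax--Milgram theorem applied to the bilinear form $B[\cdot,\cdot]$ on $V_0 = H^2(\Omega) \cap H^1_0(\Omega)$, the key point being to verify coercivity of $B$ on $V_0$. First I would check boundedness of $B$: the term $\eps\int_\Omega \Delta v \Delta w\,dx$ is clearly controlled by $\eps \|v\|_{H^2}\|w\|_{H^2}$, and since $u^\eps$ is convex its Hessian cofactor matrix $\Phi^\eps$ is symmetric positive semidefinite with entries in $L^\infty$ (indeed $\|\Phi^\eps\|_{L^\infty} = O(1/\eps)$ by \eqref{momentbounds}), so $\int_\Omega \Phi^\eps Dv \cdot Dw\,dx$ is bounded by $C(\eps)\|v\|_{H^1}\|w\|_{H^1}$. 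Hence $|B[v,w]| \le C(\eps)\|v\|_{H^2}\|w\|_{H^2}$. Next I would verify that the right-hand side $w \mapsto \langle \varphi, w\rangle - \eps\langle q, \partial w/\partial\nu\rangle_{\partial\Omega}$ is a bounded linear functional on $V_0$: the first term is bounded by $\|\varphi\|_{\duel}\|w\|_{H^2}$ by definition of the dual norm, and for the boundary term one uses the trace inequality $\|\partial w/\partial\nu\|_{H^{1/2}(\partial\Omega)} \le C\|w\|_{H^2(\Omega)}$ (valid since $\partial\Omega \in \mathcal{C}^2$) together with the $H^{-1/2}$--$H^{1/2}$ duality pairing on $\partial\Omega$, giving the bound $\eps\|q\|_{\qnorm}\|w\|_{H^2}$.

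The heart of the argument is coercivity. Testing $B$ diagonally gives
\[
B[v,v] = \eps\int_\Omega |\Delta v|^2\,dx + \int_\Omega \Phi^\eps Dv\cdot Dv\,dx \ge \eps\int_\Omega |\Delta v|^2\,dx,
\]
where the second integral is nonnegative because $\Phi^\eps \ge 0$ (convexity of $u^\eps$). Now for $v \in V_0$ — that is, $v \in H^2(\Omega)$ with $v = 0$ on $\partial\Omega$ — one has the well-known identity/estimate $\|D^2 v\|_{L^2(\Omega)}^2 \le C_\Omega \|\Delta v\|_{L^2(\Omega)}^2$ (equality with $C_\Omega = 1$ when $\partial\Omega$ is convex or smooth, via integration by parts on the cross derivative terms; in general it follows from elliptic regularity for the Dirichlet Laplacian on a $\mathcal{C}^2$ domain). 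Combined with the Poincar\'e inequality for functions vanishing on the boundary, this yields $\|v\|_{H^2(\Omega)}^2 \le C_\Omega \|\Delta v\|_{L^2(\Omega)}^2$, and therefore
\[
B[v,v] \ge \eps \|\Delta v\|_{L^2(\Omega)}^2 \ge \frac{\eps}{C_\Omega}\, \|v\|_{H^2(\Omega)}^2.
\]
So $B$ is coercive on $V_0$ with coercivity constant proportional to $\eps$. Lax--Milgram then gives a unique $v \in V_0$ solving \eqref{weaklin}, and the stability estimate $\|v\|_{H^2} \le C_1(\eps)(\|\varphi\|_{\duel} + \|q\|_{\qnorm})$ with $C_1(\eps) \sim C_\Omega/\eps$ (absorbing the $\eps$ from the boundary functional, which only improves the $q$-dependence).

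The main obstacle — or rather, the step requiring the most care — is justifying the $H^2$-coercivity bound $\|v\|_{H^2} \le C\|\Delta v\|_{L^2}$ for $v \in H^2(\Omega) \cap H^1_0(\Omega)$ on a merely $\mathcal{C}^2$ domain, since the naive integration-by-parts proof of $\|D^2v\|_{L^2} = \|\Delta v\|_{L^2}$ produces boundary curvature terms unless $\Omega$ is convex. I would handle this by invoking standard $H^2$ elliptic regularity for the Dirichlet problem $-\Delta v = g$ on a $\mathcal{C}^2$ domain (so that $\|v\|_{H^2} \le C\|g\|_{L^2}$ for $v \in H^1_0$), which is exactly the required inequality with $g = -\Delta v$; this is where the hypothesis $\partial\Omega \in \mathcal{C}^2$ is used. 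One should also note that the second bilinear term being only nonnegative (not coercive) is precisely why $\eps > 0$ is essential — the $\eps\Delta^2$ regularization is what makes the linearized operator elliptic and the whole scheme well-posed — and that $C_1(\eps)$ necessarily blows up as $\eps \to 0$, consistent with the $\eps$-dependent a priori bounds \eqref{momentbounds}.
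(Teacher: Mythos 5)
Your proposal is correct and follows essentially the same route as the paper: the paper's proof is a terse application of the Lax--Milgram theorem, asserting boundedness of $B[\cdot,\cdot]$, coercivity with constant $C_2(\eps)=O(\eps)$, and boundedness of the right-hand-side functional (details deferred to \cite{Neilan_thesis}), which matches your argument and your conclusion $C_1(\eps)=O(1/\eps)$. You simply supply the details the paper omits — the trace/duality bound for the boundary term, the nonnegativity of $\Phi^\eps Dv\cdot Dv$ via convexity of $u^\eps$, and the $H^2$ elliptic-regularity inequality $\|v\|_{H^2}\le C\|\Delta v\|_{L^2}$ on a $\mathcal{C}^2$ (here in fact convex) domain — all of which are standard and sound.
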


\begin{proof}
It is easy to check that $B[\cdot,\cdot]$ is a bounded and coercive
bilinear form on $V_0\times V_0$ with coercive constant
$C_2(\eps)=O(\eps)$. Also, the right-hand side of \eqref{weaklin}
defines a bounded linear functional on $V_0$ (see
\cite{Neilan_thesis} for details). So the assertions of the theorem
follows immediately from an application of Lax-Milgram Theorem
\cite{Gilbarg_Trudinger01}.
\end{proof}

For more regular data, the above theorem can be improved to the following one
(see \cite{Grisvard85} for a similar proof).
\begin{thm}\label{linbound2} Suppose $\varphi\in (H^{s}(\Omega)\cap \Ho)^*,\
q\in H^{s-\frac52}(\partial\Omega),$ $\partial\Omega\in
\mathcal{C}^{s},\ (s\ge 2)$ and v is the unique solution to
\eqref{weaklin}.  Then $v\in H^{s}(\Omega)\cap \Ho$, and there exists
a $C_s(\eps)>0$ such that
\begin{equation}\label{estimate2}
\|v\|_{H^{s}}\le C_s(\eps)\left(\|\varphi\|_{(H^s\cap\Ho)^*}
+\|q\|_{H^{s-\frac52}(\partial\Omega)}\right).
\end{equation}
\end{thm}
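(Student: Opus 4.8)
The plan is to bootstrap from the low-regularity result of Theorem~\ref{linbound} by a standard elliptic-regularity argument for the fourth-order operator $L_{u^\eps}$, rewriting it as a perturbation of $-\eps\Delta^2$ whose lower-order part has coefficients $\Phi^\eps$ that lie in $H^s$ because $u^\eps\in H^{s+2}$ (here one uses that the a~priori bounds place $u^\eps$ in high-order Sobolev spaces, and that $\det$ and cofactor maps are polynomial, hence map $H^{s}$ coefficients into $H^{s-?}$ data). First I would reduce to homogeneous boundary data: pick a lifting $\Phi\in H^s(\Omega)$ with $\Phi=0$ and $\Delta\Phi = q$ on $\partial\Omega$ (possible since $q\in H^{s-5/2}(\partial\Omega)$ and $\partial\Omega\in\mathcal C^s$, by the usual trace/extension theorems), set $\tilde v = v-\Phi$, and observe that $\tilde v$ solves $L_{u^\eps}(\tilde v) = \varphi - L_{u^\eps}(\Phi)=:\tilde\varphi$ with $\tilde v = \Delta\tilde v = 0$ on $\partial\Omega$, where $\tilde\varphi\in (H^{s-2}(\Omega)\cap\Ho)^*$ at least.

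Next I would run an induction on $s$. The base case $s=2$ is Theorem~\ref{linbound}. For the inductive step, assume the estimate holds at level $s-1$; write $-\eps\Delta^2\tilde v = \tilde\varphi - \Div(\Phi^\eps D\tilde v)$ and note that the right-hand side, by the inductive hypothesis $\tilde v\in H^{s-1}$ together with $\Phi^\eps\in H^{s-1}$ (product estimates in Sobolev spaces, valid once $s-1 > n/2$, or more carefully with the precise multiplication rules), belongs to $(H^{s-2}(\Omega)\cap\Ho)^*$ or better. Then apply the known $H^s$-regularity theory for the biharmonic operator with the Navier-type boundary conditions $\tilde v = \Delta\tilde v = 0$ — this is exactly the situation handled in Grisvard, since those boundary conditions decouple $\Delta^2$ into two successive Dirichlet Laplacians — to conclude $\tilde v\in H^s(\Omega)$ with the corresponding bound, and then transfer back to $v$ by adding $\Phi$ and absorbing the norm of the lifting by the trace-extension estimate $\|\Phi\|_{H^s}\le C\|q\|_{H^{s-5/2}(\partial\Omega)}$. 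Tracking the constants carefully yields $C_s(\eps)$, which will degenerate as $\eps\to 0$ through the repeated division by the ellipticity constant $C_2(\eps)=O(\eps)$ of Theorem~\ref{linbound} and through the $\eps$-dependence of $\|\Phi^\eps\|_{L^\infty}=O(1/\eps)$ in \eqref{momentbounds}.

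I expect the main obstacle to be the bookkeeping of the $\eps$-dependence in $C_s(\eps)$ and, more subtly, making the product estimates $\|\Div(\Phi^\eps D\tilde v)\|_{(H^{s-2}\cap\Ho)^*}\lesssim \|\Phi^\eps\|_{H^{s-1}}\|\tilde v\|_{H^{s-1}}$ rigorous in the borderline dimensions $n=2,3$, where one must invoke the correct Sobolev multiplication theorem (e.g. $H^{s_1}\cdot H^{s_2}\hookrightarrow H^{s_3}$ with $s_1+s_2-s_3 > n/2$ and $s_i\ge s_3$) rather than a naive $L^\infty$ bound, and where the lowest rung of the induction needs separate care since $\Phi^\eps$ involves second derivatives of $u^\eps$. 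Since the statement only claims existence of \emph{some} constant $C_s(\eps)>0$ — with no explicit rate in $\eps$ required — the argument can afford to be generous with powers of $1/\eps$; the essential content is the spatial regularity gain, for which the cited Grisvard-type argument for $\Delta^2$ under Navier boundary conditions is the right template, and the perturbation term is genuinely lower order so it does not affect the top-order regularity theory.
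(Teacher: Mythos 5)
Your approach---lifting the boundary datum $q$, then bootstrapping regularity for the Navier-type biharmonic problem (which decouples into two Dirichlet Laplacians, \`a la \cite{Grisvard85}) with the $\Phi^\eps$-term treated as a lower-order perturbation---is precisely the route the paper intends: the paper gives no proof of Theorem \ref{linbound2} at all, only the remark ``see \cite{Grisvard85} for a similar proof,'' so your sketch is consistent with, and more detailed than, what is in the paper. The only caveats are that the needed coefficient regularity $\Phi^\eps\in H^{s-1}$ must come from smoothness of $u^\eps$ for fixed $\eps$ (not from \eqref{momentbounds}, which only covers $j=2,3$ and $W^{2,\infty}$), and that your claim of tracking $C_s(\eps)$ should be tempered by the paper's own admission that its $\eps$-dependence is unknown for $s\ge 3$---though, as you note, the statement only requires existence of some constant.
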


\begin{Remark}
From the definition of $C_2(\eps)$, we see that in Theorem $\ref{linbound}$,
$C_1(\eps)=\textsl{O}\left(\frac{1}{\eps}\right)$. Currently, the
explicit dependence of $C_s(\eps)$ on $\vepsi$ in Theorem $\ref{linbound2}$
remains unknown for general $s\ge 3$. Finally, we note
that Theorem \ref{linbound} can easily be extended to the case of
nonhomogeneous boundary data which we summarize in the following theorem.
\end{Remark}

\begin{thm}\label{nonhomo}
For every $\varphi\in V^*,\ g\in H^{\frac32}(\partial\Omega),\ q\in \qnorm$,
there exists a unique weak solution $w\in V$ to
\begin{alignat*}{2}
L_{u^\varepsilon}(w)&=\varphi &&\quad \text{\rm in}\ \Omega,\\
w&=g &&\quad \text{\rm on}\ \partial\Omega, \\
\Delta w&=q &&\quad \text{\rm on}\ \partial\Omega.
\end{alignat*}
Furthermore, there exists $C>0$ such that
\begin{equation}\label{estimate3}
\|w\|_{H^{2}} \le \frac{C}{\eps}\left(\|\varphi\|_\duel
+\|g\|_{H^{\frac32}(\partial\Omega)}+\|q\|_{\qnorm}\right).
\end{equation}
\end{thm}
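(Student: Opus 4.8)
The plan is to reduce the nonhomogeneous boundary value problem to the homogeneous one already settled in Theorem \ref{linbound} by subtracting off a suitable lifting of the boundary data. First I would invoke the standard trace theory: since $g\in H^{3/2}(\partial\Omega)$ and $q\in H^{-1/2}(\partial\Omega)$, there is no direct $H^2$-lifting whose Laplacian has the prescribed trace $q$ (the trace of $\Delta$ of an $H^2$ function lives in $H^{-1/2}(\partial\Omega)$ only in a weak sense), so one has to be a little careful. The cleanest route is to first pick $w_1\in H^2(\Omega)$ with $w_1|_{\partial\Omega}=g$ and $\|w_1\|_{H^2}\le C\|g\|_{H^{3/2}(\partial\Omega)}$ by the usual trace lifting, and then set $v:=w-w_1$. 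Then $v$ must solve $L_{u^\varepsilon}(v)=\varphi - L_{u^\varepsilon}(w_1)=:\tilde\varphi$ in $\Omega$, $v=0$ on $\partial\Omega$, and $\Delta v = q - \Delta w_1 =: \tilde q$ on $\partial\Omega$.

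The second step is to check that $\tilde\varphi$ and $\tilde q$ lie in the right spaces so that Theorem \ref{linbound} applies. For $\tilde q$: since $w_1\in H^2(\Omega)$, $\Delta w_1\in L^2(\Omega)$, and in fact the relevant pairing $\langle \tilde q, \partial w/\partial\nu\rangle_{\partial\Omega}$ appearing in the weak formulation \eqref{weaklin} should be interpreted via integration by parts against $w\in V_0$, so one really needs $\tilde q\in H^{-1/2}(\partial\Omega)$; this follows from $q\in\qnorm$ and the trace estimate $\|\Delta w_1\|_{H^{-1/2}(\partial\Omega)}\le C\|w_1\|_{H^2}$ (or, more precisely, one absorbs the $\Delta w_1$ contribution into the linear functional on $V_0$ directly, using $\|w_1\|_{H^2}$). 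For $\tilde\varphi$: writing $L_{u^\varepsilon}(w_1)=-\varepsilon\Delta^2 w_1 + \mathrm{div}(\Phi^\varepsilon Dw_1)$, each term defines a bounded linear functional on $V_0=H^1_0\cap H^2$ — the biharmonic term by two integrations by parts giving $\varepsilon(\Delta w_1,\Delta w)$, bounded by $\varepsilon\|w_1\|_{H^2}\|w\|_{H^2}$, and the cofactor term by one integration by parts giving $-(\Phi^\varepsilon Dw_1, Dw)$, bounded by $\|\Phi^\varepsilon\|_{L^\infty}\|w_1\|_{H^1}\|w\|_{H^1}$. Hence $\|\tilde\varphi\|_{V_0^*}\le \|\varphi\|_{V_0^*} + C(\varepsilon + \|\Phi^\varepsilon\|_{L^\infty})\|w_1\|_{H^2}$.

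The third step is to apply Theorem \ref{linbound} to get a unique $v\in V_0$ with $\|v\|_{H^2}\le C_1(\varepsilon)(\|\tilde\varphi\|_{V_0^*}+\|\tilde q\|_{\qnorm})$, then set $w:=v+w_1$. Uniqueness for $w$ follows from uniqueness for $v$ together with the fact that any two liftings differ by an element of $V_0$ solving the homogeneous problem, which Theorem \ref{linbound} forces to be zero. For the quantitative bound \eqref{estimate3}, I would combine $\|w\|_{H^2}\le \|v\|_{H^2}+\|w_1\|_{H^2}$ with the estimate for $\|v\|_{H^2}$, the bounds on $\|\tilde\varphi\|_{V_0^*}$ and $\|\tilde q\|_{\qnorm}$ above, and the Remark's identification $C_1(\varepsilon)=O(1/\varepsilon)$, together with the a priori bound $\|\Phi^\varepsilon\|_{L^\infty}=\|\mathrm{cof}(D^2u^\varepsilon)\|_{L^\infty}=O(1/\varepsilon)$ from \eqref{momentbounds}. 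The product $C_1(\varepsilon)\cdot\|\Phi^\varepsilon\|_{L^\infty}$ would naively produce an $O(1/\varepsilon^2)$ factor multiplying $\|g\|_{H^{3/2}(\partial\Omega)}$; the main obstacle is thus seeing that the claimed $C/\varepsilon$ is genuinely attainable — one should choose the lifting $w_1$ more cleverly (e.g. harmonic, or biharmonic) so that some of these terms drop or so that $\|w_1\|_{H^2}$ carries a compensating factor of $\varepsilon$, or else one re-examines the coercivity/boundedness constants of $B[\cdot,\cdot]$ restricted to the lifting to show the $\varepsilon$-dependence collapses. Handling this $\varepsilon$-bookkeeping carefully is the only non-routine point; everything else is a direct translation from Theorem \ref{linbound} via the trace lifting.
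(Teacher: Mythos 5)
Your reduction-by-lifting strategy is the natural one, and it is essentially all the paper offers as well: the paper gives no proof of Theorem \ref{nonhomo} beyond a pointer to Grisvard, so the route itself (lift $g$ to $w_1\in\htw$ with $\|w_1\|_{\htw}\le C\|g\|_{H^{3/2}(\partial\Omega)}$, subtract, keep the term $B[w_1,\cdot]$ as part of the bounded functional on $V_0$ rather than taking a boundary trace of $\Delta w_1$, and invoke Theorem \ref{linbound}) is exactly what one would expect. Your uniqueness argument and your treatment of the $q$-term are fine.

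The genuine gap is the one you yourself flag and then leave open: the claimed constant $C/\eps$ in \eqref{estimate3}. As you note, feeding $\|\tilde\varphi\|_{V_0^*}\le \|\varphi\|_{V_0^*}+C(\eps+\|\Phi^\eps\|_{L^\infty})\|w_1\|_{\htw}$ into $C_1(\eps)=O(1/\eps)$ and using $\|\Phi^\eps\|_{L^\infty}=O(1/\eps)$ from \eqref{momentbounds} yields only $O(\eps^{-2})\|g\|_{H^{3/2}(\partial\Omega)}$, so the statement as written is not proved by your argument. The missing idea is not a cleverer lifting but a sharper treatment of the single troublesome cross term $(\Phi^\eps Dw_1,Dv)$, exploiting that $\Phi^\eps=\text{cof}(D^2u^\eps)$ is positive semidefinite because $u^\eps$ is convex. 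Testing the equation for $v=w-w_1$ with $v$ itself, estimate
\begin{equation*}
\bigl|(\Phi^\eps Dw_1,Dv)\bigr|\le (\Phi^\eps Dw_1,Dw_1)^{\frac12}\,(\Phi^\eps Dv,Dv)^{\frac12}
\le \tfrac12(\Phi^\eps Dw_1,Dw_1)+\tfrac12(\Phi^\eps Dv,Dv),
\end{equation*}
and absorb $\tfrac12(\Phi^\eps Dv,Dv)$ into $B[v,v]$. What remains on the right is $\tfrac12\|\Phi^\eps\|_{L^\infty}\|w_1\|_{H^1}^2=O(\eps^{-1})\|g\|_{H^{3/2}(\partial\Omega)}^2$, while the left-hand side still controls $\eps\,c\,\|v\|_{\htw}^2$ through the biharmonic part; together with the routine estimates for the $\varphi$, $q$ and $\eps(\Delta w_1,\Delta v)$ terms this gives $\|v\|_{\htw}^2\le C\eps^{-2}\bigl(\|\varphi\|_\duel+\|g\|_{H^{3/2}(\partial\Omega)}+\|q\|_\qnorm\bigr)^2$, i.e.\ exactly the $C/\eps$ bound of \eqref{estimate3} after adding back $\|w_1\|_{\htw}$. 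So your skeleton is correct, but the energy argument with the weighted Cauchy--Schwarz absorption (rather than the crude $L^\infty$-bound followed by the generic stability constant $C_1(\eps)$) is the step that actually delivers the stated $\eps$-dependence, and it needs to be carried out rather than deferred.
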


We refer the reader to \cite{Grisvard85} for a similar proof.


\subsection{Finite element approximation of linearized problem}\label{sec-3.2}

Let $V_0^h\subset V_0$ be one of the finite-dimensional subspace of
$V_0$ as defined in the previous subsection (e.g. Aygris finite
element and Legendre spectral element), and $v\in V_0$ denote the
solution of \eqref{weaklin}.  Based on the variational equation
$\eqref{weaklin}$, our Galerkin method for $\eqref{lin}$ is defined
as seeking $v_h\in V^h_0$ such that
\begin{equation}\label{weakfem}
B[v_h,w_h]=\langle \varphi, w_h\rangle - \eps \left\langle q,
\normd{w_h}\right\rangle_{\partial\Omega}\spa \forall w_h\in V^h_0.
\end{equation}

Our objective in this subsection is to first prove existence and
uniqueness for problem \eqref{weakfem} and then derive optimal
order error estimates in various norms.

\begin{thm}\label{existence}
Suppose $v\in V_0\cap H^s(\Omega)\ (s\ge 3).$  Then there exists a
unique $v_h\in V^h_0$ satisfying \eqref{weakfem}.
Furthermore, we have the following estimates:
\begin{align}\label{nest}
\|v_h\|_{H^{2}(\Omega)}&\le C_3(\eps) \left(\|\varphi\|_\duel+\|q\|_\qnorm\right),
\\
\|v-v_h\|_\Htw &\le C_4(\eps) h^{\ell-2}\|v\|_{H^{\ell}(\Omega)}, \label{nest2}\\
\|v-v_h\|_{H^1(\Omega)}&\le C_5(\eps) h^{\ell-1}\|v\|_{H^{\ell}(\Omega)},
\label{nest3}\\
\|v-v_h\|_{L^2(\Omega)}&\le C_6(\eps) h^{\ell}\|v\|_{H^{\ell}(\Omega)},\label{nest4}
\end{align}
where $\ell=\text{\rm min}\{r+1,s\}$ in the case of the finite element Galerkin 
method, and $\ell=\text{\rm min}\{N+1,s\}$ in the case of the spectral Galerkin method.
\end{thm}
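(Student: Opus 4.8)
The plan is to prove Theorem \ref{existence} by combining the coercivity of $B[\cdot,\cdot]$ established in the proof of Theorem \ref{linbound} with the standard C\'ea/Aubin--Nitsche machinery, since \eqref{weakfem} is a \emph{linear} Galerkin scheme posed on a conforming subspace $V_0^h\subset V_0$. First I would observe that existence and uniqueness of $v_h\in V_0^h$ are immediate: $B[\cdot,\cdot]$ restricted to $V_0^h\times V_0^h$ inherits boundedness and the coercivity constant $C_2(\eps)=O(\eps)$ from Theorem \ref{linbound}, the right-hand side of \eqref{weakfem} is a bounded linear functional on $V_0^h$, so the Lax--Milgram theorem applies on the finite-dimensional space (equivalently, the square linear system has only the trivial kernel). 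Testing \eqref{weakfem} with $w_h=v_h$ and using coercivity together with the bounds on $\varphi$ and $q$ then yields the stability estimate \eqref{nest} with $C_3(\eps)=O(1/\eps)$, again exactly as in the continuous case.

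Next I would derive the energy-norm estimate \eqref{nest2}. Since $V_0^h\subset V_0$ and both $v$ and $v_h$ satisfy the same variational identity tested against $w_h\in V_0^h$, Galerkin orthogonality $B[v-v_h,w_h]=0$ holds for all $w_h\in V_0^h$. Coercivity and boundedness of $B$ then give C\'ea's lemma in the form $\|v-v_h\|_{H^2}\le (C/C_2(\eps))\inf_{w_h\in V_0^h}\|v-w_h\|_{H^2}$; note the quasi-optimality constant here is $O(1/\eps)$ because the ratio of the continuity constant (which also contains $\|\Phi^\eps\|_{L^\infty}=O(1/\eps)$, see \eqref{momentbounds}) to the coercivity constant $O(\eps)$ enters, and I would track this explicitly so that $C_4(\eps)$ is honestly reported. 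Invoking the approximation property of $V^h$ (or $V^N$) with $\ell=\min\{r+1,s\}$ (resp. $\min\{N+1,s\}$) then produces \eqref{nest2}. Estimate \eqref{nest3} in the $H^1$-norm follows by a single Aubin--Nitsche duality step: introduce the auxiliary problem $L_{u^\eps}(z)=-\Delta(v-v_h)$ (or the appropriate Riesz representative of the $H^1$ error functional) with homogeneous data, apply Theorem \ref{linbound} (or \ref{linbound2} with $s=3$) to get $\|z\|_{H^3}\le C_3(\eps)\|v-v_h\|_{H^1}$, use Galerkin orthogonality to subtract an interpolant $z_h$, and bound $\|v-v_h\|_{H^1}^2=B[v-v_h,z-z_h]\le C\,\|v-v_h\|_{H^2}\|z-z_h\|_{H^2}$; combining with \eqref{nest2} and the approximation estimate for $z$ gains one power of $h$. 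Estimate \eqref{nest4} is obtained the same way, now taking the dual problem with right-hand side $v-v_h\in L^2$, using the $H^2$-regularity shift of Theorem \ref{linbound} and gaining two powers of $h$ relative to the energy estimate.

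The main obstacle — and the reason this is more than a boilerplate finite element argument — is the $\eps$-bookkeeping. The bilinear form $B$ is not uniformly coercive or uniformly bounded: its coercivity constant degenerates like $\eps$ while the zeroth-order (cofactor) term carries $\|\Phi^\eps\|_{L^\infty}=O(1/\eps)$, so the naive quasi-optimality constant behaves like $O(1/\eps^2)$ unless one argues more carefully. I would handle this by splitting $B[v-v_h,v-v_h]=\eps\|\Delta(v-v_h)\|_{L^2}^2+(\Phi^\eps D(v-v_h),D(v-v_h))$, keeping the two contributions separate, using the nonnegativity of the second term (valid because $u^\eps$ is convex, so $\Phi^\eps$ is positive semidefinite — a fact from \cite{Feng1,Feng2} already quoted in the excerpt) to avoid paying $1/\eps$ on the coercive side, and only spending the $\|\Phi^\eps\|_{L^\infty}$ factor on the consistency/approximation side where it can be absorbed into the constants $C_i(\eps)$. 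For the duality estimates \eqref{nest3}--\eqref{nest4} the extra $\eps$-dependence comes solely through $C_3(\eps)$ from Theorem \ref{linbound}, which is already known to be $O(1/\eps)$, so those constants $C_5(\eps),C_6(\eps)$ inherit a controlled, explicit dependence on $\eps$; for the spectral case the only change is replacing $h^{\ell-j}$ by $N^{j-s}$ via the Legendre approximation estimate, with the identical argument otherwise.
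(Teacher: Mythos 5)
Your proposal is correct and follows essentially the same route as the paper: Lax--Milgram/coercivity of $B[\cdot,\cdot]$ on $V_0^h$ for existence, uniqueness and the stability bound \eqref{nest}, Galerkin orthogonality plus a C\'ea-type argument for \eqref{nest2}, and Aubin--Nitsche duality with the auxiliary problems $L_{u^\eps}(\psi)=\Delta(v-v_h)$ and $L_{u^\eps}(\psi)=v-v_h$ (invoking the regularity of Theorem \ref{linbound2}) for \eqref{nest3} and \eqref{nest4}. The only divergence is in the $\eps$-bookkeeping of the quasi-optimality constant, where the paper bounds the continuity of $B$ by $C/\sqrt{\eps}$ to obtain $C_4(\eps)=O(\eps^{-3/2})$, but since the theorem only asserts $\eps$-dependent constants this does not affect the validity of your argument.
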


\begin{proof}
Estimate \eqref{nest} follows immediately from setting $w_h=v_h$ in
\eqref{weakfem} and using the coercivity of the bilinear form $B[\cdot,\cdot]$.

To derive the error estimate in the $H^2$-norm, we use the error equation,
\[
B[v-v_h,w_h]=0\spa \forall w_h\in V^h_{0}.
\]
Using the coercivity property of $B[\cdot,\cdot]$, we have
\begin{align}
C_2(\vepsi)\|v-v_h\|^2_{H^2}&\le B[v-v_h, v-v_h]=B[v-v_h,v]-B[v-v_h,v_h]\\
&=B[v-v_h,v]=B[v-v_h,v-I_h v], \nonumber
\end{align}
where $C_2(\vepsi)=O(\vepsi)$, $I_h v$ denotes the Galerkin interpolant
of $v$ onto $V^h_0$. On noting that
\[
B(v-v_h,v-I_h v)\le \frac{C}{\sqrt{\eps}}
\|v-v_h\|_{H^2_0}\|v-I_h v\|_{H^2_0},
\]
we have
\[
\|v-v_h\|_{H^2}\le \frac{C}{C_2(\vepsi)\sqrt{\eps}}\|v-I_h v\|_{H^2}
\leq C_4(\eps) h^{l-2}\|v\|_{H^l},
\]
where
$C_4(\eps)=O\left(\eps^{-\frac32}\right).$  Thus, \eqref{nest2} holds.

Next, we derive the $H^1$-norm error estmate using a duality argument. Define
$e_h:=v-v_h$ and consider the following problem:
\begin{alignat*}{2}
L_{u^\varepsilon}(\psi)&=\Delta e_h\spa  &&\text{in}\ \Omega,\\
\psi&=0 \spa &&\text{on}\ \partial\Omega,\\
\Delta \psi&=0  && \text{on}\ \partial\Omega.
\end{alignat*}
Using $\eqref{estimate2}$, we have
\[
\|\psi\|_{H^{3}}\le C_s(\eps)\|\Delta e_h\|_{H^{-1}}.
\]

Since $\|\Delta e_h\|_{H^{-1}}=\text{sup}\{\langle \Delta e_h,
u\rangle |\ u\in H^1_0(\Omega),\ \|u\|_{H^1_0}\le 1\}$, we have
\[
\langle \Delta e_h,u\rangle = (\nabla e_h,\nabla u)
\le \|\nabla e_h\|_\lt\|\nabla u\|_\lt
\le \|\nabla e_h\|_{L^2} \|u\|_{H^1_0}=\|\nabla e_h\|_\lt .
\]
It follows that
\[
\|\Delta e_h\|_{H^{-1}}\le \|\nabla e_h\|_{L^2}.
\]
Thus,
\begin{align*}
\|\nabla e_h\|_{L^2}^2&=\langle \Delta e_h, e_h\rangle
=B[e_h,\psi]=B[e_h,\psi-I_h\psi]\\
&\le \frac{C}{\sqrt{\eps}}\|\psi-I_h\psi\|_{H^2}\|e_h\|_{H^2}\le
\frac{hC}{\sqrt{\eps}} \|\psi\|_{H^3}\|e_h\|_{H^2} \nonumber\\
&\le \frac{hC_s(\eps)}{\sqrt{\eps}} \|\Delta
e_h\|_{H^{-1}}\|e_h\|_{H^2}\le
\frac{hC_s(\eps)}{\sqrt{\eps}}\|\nabla e_h\|_\lt\|e_h\|_{H^2}. \nonumber
\end{align*}
Hence,
\[
\|\nabla e_h\|_\lt \le \frac{hC_s(\eps)}{\sqrt{\eps}}\|e_h\|_{H^2}.
\]
Combining the above with \eqref{nest2}, we get \eqref{nest3} with
$C_5=O\left(C_s(\eps)\eps^{-2}\right).$

To derive the error in the $L^2$-norm, we consider the following problem:
\begin{alignat*}{2}
L_{u^\varepsilon}(\psi)&=e_h \spa &&\text{in}\ \Omega,\\
\psi&=0  \spa &&\text{on}\ \partial\Omega, \\
\Delta\psi&=0  \spa &&\text{on}\ \partial\Omega.
\end{alignat*}
On noting $\eqref{estimate2}$ implies that
\[
\|\psi\|_{H^{4}}\le C_s(\eps)\|e_h\|_{L^2}.
\]
Thus,
\begin{align*}
\|e_h\|_\lt^2&=(e_h,v-v_h)=B[v-v_h,\psi]=B[v-v_h,\psi-I_h\psi]\\&\le
\frac{C}{\sqrt{\eps}} \|v-v_h\|_{H^2}\|\psi-I_h\psi\|_{H^2}\\
&\le \frac{h^2C}{\sqrt{\eps}}\|v-v_h\|_{H^2}\|\psi\|_{H^{4}}\le
\frac{h^2 C_s(\eps)}{\sqrt{\eps}} h^2 \|v-v_h\|_{H^2}\|e_h\|_\lt .
\end{align*}
Dividing by $\|e_h\|_\lt$, we get \eqref{nest4} with
$C_6=O\left(C_s(\eps)\eps^{-2}\right)$. The proof is complete.
\end{proof}

\begin{remark}
(a) In the case of Aygris finite element method, $r=5$
in \eqref{nest}--\eqref{nest4}.

(b) In the case of Legendre spectral method, $h=\frac{1}{N}$ in
\eqref{nest}--\eqref{nest4}, where $N$ stands for the highest degree of
polynomials in $V^N$.
\end{remark}

\section{Error Analysis for Galerkin Methods \eqref{galmethod} and
\eqref{spectralmethod}}\label{sec-4}

The goal of this section is to derive optimal order error estimates
for the Galerkin methods \eqref{galmethod} and \eqref{spectralmethod}.
Our main idea is to use a combined fixed point and linearization technique
which was introduced by the authors in \cite{Feng_Neilan_Prohl07}.
Once again, we only present the detailed analysis for \eqref{galmethod}
since the analysis for \eqref{spectralmethod} is essentially same.

First, we define a linear operator $T: V^h_g\to V^h_g$.
For any $w_h\in V^h_g$, let $T(w_h)\in V^h_g$ denote the solution of
following problem:
\begin{align}\label{Toperator}
B[w_h-T(w_h),\psi_h] &= \eps(\Delta w_h,\Delta \psi_h) -
(\text{det}(D^2 w_h),\psi_h) \\
&\hskip 1in +(f,\psi_h) - \left\langle\eps^2,
\normd{\psi_h}\right\rangle_{\partial\Omega}\quad \forall \psi_h\in V^h_{0}.
\nonumber
\end{align}

By Theorem \ref{existence}, we see $T(w_h)$ is uniquely defined.
Notice that the right-hand side of \eqref{Toperator} is the residual
of $w_h$ to equation \eqref{galmethod}.  It is easy to see that any
fixed point $w_h$ of the mapping $T$ (i.e. $T(w_h)=w_h$) is a
solution to problem \eqref{galmethod} and vice-versa. In the following we shall
show that the mapping $T$ indeed has a unique fixed point in a small
neighborhood of $I_h u^\eps$. To this end, we set
\[
B_h(\rho):=\bigl\{v_h\in V^h_g;\ \|v_h-I_hu^\eps\|_\htw \le \rho \bigr\}.
\]
In the rest of the section, we assume $u^\eps\in H^s(\Omega)$
and set $\ell=\text{min}\{r+1,s\}$.

\begin{lem}\label{lem51}
There exists a constant $C_7(\eps)=O\left(\eps^{-n}\right)>0\, (n=2,3)$ such that
\begin{equation}\label{eq51}
\|I_h u^\eps-T(I_h u^\eps)\|_{H^2} \le C_7(\eps) h^{\ell-2}\|u^\eps\|_{H^\ell}.
\end{equation}
\end{lem}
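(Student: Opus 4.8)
The plan is to estimate the residual of $I_h u^\eps$ in equation \eqref{galmethod} and then feed it through the stability estimate \eqref{nest} for the linearized problem. By definition of $T$, the difference $I_h u^\eps - T(I_h u^\eps)$ solves
\[
B[I_h u^\eps - T(I_h u^\eps), \psi_h] = \eps(\Delta I_h u^\eps, \Delta \psi_h) - (\det(D^2 I_h u^\eps), \psi_h) + (f,\psi_h) - \langle \eps^2, \normd{\psi_h}\rangle_{\partial\Omega}
\]
for all $\psi_h \in V^h_0$. The first step is to subtract the continuous equation \eqref{variational2}, which $u^\eps$ satisfies, so that the right-hand side becomes
\[
\eps(\Delta(I_h u^\eps - u^\eps), \Delta \psi_h) + (\det(D^2 u^\eps) - \det(D^2 I_h u^\eps), \psi_h),
\]
the boundary terms cancelling. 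This exhibits the residual as the sum of a linear consistency term and a nonlinear (determinant difference) term.

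The second step is to bound each piece. The linear term is immediately $\le \eps \|I_h u^\eps - u^\eps\|_{H^2}\|\psi_h\|_{H^2} \le C\eps h^{\ell-2}\|u^\eps\|_{H^\ell}\|\psi_h\|_{H^2}$ by the interpolation estimate. For the nonlinear term, I would use the multilinearity of the determinant: $\det(D^2 u^\eps) - \det(D^2 I_h u^\eps)$ is a polynomial in the entries of $D^2 u^\eps$ and $D^2 I_h u^\eps$ that vanishes when the two Hessians agree, hence can be written as a sum of products in which one factor is an entry of $D^2(u^\eps - I_h u^\eps)$ and the remaining $n-1$ factors are entries of $D^2 u^\eps$ or $D^2 I_h u^\eps$. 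Using $\|D^2 I_h u^\eps\|_{L^\infty} \le \|D^2 u^\eps\|_{L^\infty} + \|D^2(u^\eps - I_h u^\eps)\|_{L^\infty} \le C/\eps$ (the first bound from \eqref{momentbounds}, the second from an inverse inequality together with the $H^2$ interpolation bound, which is where the extra negative powers of $\eps$ enter), I pair the $L^2$ factor $\|D^2(u^\eps - I_h u^\eps)\|_{L^2} \le C h^{\ell-2}\|u^\eps\|_{H^\ell}$ against $\|\psi_h\|_{L^2}$ and absorb the remaining $L^\infty$ factors, giving a bound of order $\eps^{-(n-1)} h^{\ell-2}\|u^\eps\|_{H^\ell}\|\psi_h\|_{H^2}$.

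The third step is to apply \eqref{nest}: taking $\varphi$ and $q$ to be the functionals defined by the residual just estimated, we get $\|I_h u^\eps - T(I_h u^\eps)\|_{H^2} \le C_3(\eps)\,(\text{residual norm})$, and since $C_3(\eps) = C_1(\eps) = O(\eps^{-1})$ and the residual is $O(\eps^{-(n-1)} h^{\ell-2}\|u^\eps\|_{H^\ell})$ (the $\eps^{-(n-1)}$ term dominating the $\eps^{+1}$ linear term), the product is $O(\eps^{-n})$, which is the asserted $C_7(\eps)$.

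The main obstacle is the sharp tracking of $\eps$-powers in the nonlinear term — in particular, controlling $\|D^2 I_h u^\eps\|_{L^\infty}$ requires care: one cannot simply cite $\|u^\eps\|_{W^{2,\infty}} = O(1/\eps)$ for the interpolant, so an inverse estimate $\|D^2 I_h u^\eps\|_{L^\infty} \le C h^{-n/2}\|I_h u^\eps\|_{H^2}$ together with stability of the interpolant must be combined carefully, and one must check that the resulting mesh-dependent factors do not spoil the claimed rate $h^{\ell-2}$ (this is where quasiuniformity of $T_h$ is used, and where the precise exponent $-n$ rather than $-(n-1)$ in $C_7(\eps)$ originates). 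Everything else is routine interpolation and Cauchy--Schwarz.
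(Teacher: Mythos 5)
Your proposal follows essentially the same route as the paper: subtract the continuous equation \eqref{variational2} to exhibit the residual, write the determinant difference as a cofactor matrix contracted with $D^2(u^\eps-I_h u^\eps)$ (the paper does this via the Mean Value Theorem, $\det(D^2 I_hu^\eps)-\det(D^2u^\eps)=\tilde{\Phi}:D^2\eta_h$), bound that cofactor in $L^\infty$ by $O(\eps^{-(n-1)})$, and conclude from the $\eps$-dependent stability of $B[\cdot,\cdot]$ — the paper simply tests with $\omega_h=I_hu^\eps-T(I_hu^\eps)$ and uses coercivity, which is exactly what your appeal to \eqref{nest} encodes. One correction to your closing remark: the paper obtains $\|D^2 I_h u^\eps\|_{L^\infty}\le C\|D^2u^\eps\|_{L^\infty}\le C/\eps$ directly from $W^{2,\infty}$-stability of the interpolant, with no inverse estimates or mesh-dependent factors (your proposed bound $Ch^{-n/2}\|I_hu^\eps\|_{H^2}$ would not give an $h$-independent $C/\eps$), and the exponent $-n$ in $C_7(\eps)$ arises as $\eps^{-(n-1)}$ from the cofactor bound times $\eps^{-1}$ from the coercivity constant $C_2(\eps)^{-1}$, exactly as in your third step — not from quasiuniformity or inverse-estimate bookkeeping.
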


\begin{proof}
To simplify notation, let $\omega_h:=I_h u^\eps-T(I_h u^\eps)$ and
let $\eta_h:=I_hu^\eps-u^\eps$.  We then have
\begin{align*}
B[\omega_h,\omega_h]&=\eps(\Delta (I_h u^\eps),\Delta \omega_h)
-(\text{det}(D^2(I_h u^\eps))-f,\omega_h)
-\left\langle\eps^2,\normd{\omega_h}\right\rangle_{\partial\Omega}\\
&=\eps(\Delta \eta_h,\Delta \omega_h)+(\text{det}(D^2 u^\eps)
-\text{det}(D^2(I_hu^\eps)),\omega_h)\\
&\le \eps \|\Delta \eta_h\|_\lt||\Delta
\omega_h\|_\lt+\|\text{det}(D^2 u^\eps)
-\text{det}(D^2(I_h u^\eps))\|_\lt \|\omega\|_\lt.
\end{align*}
Using the Mean Value Theorem we have
\begin{align*}
\text{det}(D^2(I_hu^\eps))-\text{det}(D^2u^\eps)
&=\tilde{\Phi}:D^2\eta_h,
\end{align*}
where $\tilde{\Phi}=\text{cof}(\tau
D^2(I_hu^\eps)+(1-\tau)D^2u^\eps)$ for some $\tau\in [0,1]$.

Next, when $n=2$, we bound $\|\tilde{\Phi}\|_{L^\infty}$ as follows:
\begin{align*}
\|\tilde{\Phi}^\eps\|_{L^\infty}&=\|\text{cof}(\tau D^2(I_h u^\eps)
+(1-\tau)D^2 u^\eps)\|_{L^\infty} \\
&=\|\tau D^2(I_h u^\eps)+(1-\tau)D^2 u^\eps\|_{L^\infty}\\
&\le \|D^2(I_h u^\eps)\|_{L^\infty}+\|D^2u^\eps\|_{L^\infty} \\
&\le C \|D^2 u^\eps\|_{L^\infty}\le \frac{C}{\eps}.
\end{align*}
Similarly, when $n=3$, we can show that
$\|\tilde{\Phi}^\eps\|_{L^\infty}=O(\vepsi^{-2})$. Hence,
\begin{align*}
B[\omega_h,\omega_h]&\le \eps\|\Delta \eta_h\|_\lt \|\Delta
\omega_h\|_\lt + \frac{C}{\eps^{n-1}}\|D^2 \eta_h\|_\lt||\omega_h\|_\lt\\
&\le \frac{C}{\eps^{n-1}}\|\eta_h\|_\htw\|\omega_h\|_\htw.
\end{align*}
Using the coercivity of the bilinear form $B[\cdot,\cdot]$ we get
\[
\|\omega_h\|_\htw\le \frac{C}{\eps^{n-1} C_2(\eps)}\|\eta_h\|
\le \frac{C}{\eps^{n-1} C_2(\eps)}h^{\ell-2}\|u^\eps\|_{H^\ell}.
\]
Thus, \eqref{eq51} holds with $C_7(\eps)=\frac{C}{\eps^{n-1} C_2(\eps)}
=O\left(\eps^{-n}\right)$.
\end{proof}


\begin{lem}\label{lem52}
There exists an $h_0>0$ and $0<\rho_0<1$ such that for $h\le h_0$,
the mapping T is a contracting mapping in the ball $B_h(\rho_0)$
with a contraction factor $\frac12$.  That is, for any $v_h,w_h\in
B_h(\rho_0)$, there holds
\begin{equation} \label{e4.3}
\|T(v_h)-T(w_h)\|_{H^2}\le \frac12 \|v_h-w_h\|_{H^2}.
\end{equation}
\end{lem}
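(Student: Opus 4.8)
The plan is to estimate $T(v_h) - T(w_h)$ directly from the defining equation \eqref{Toperator}. Subtracting the equation for $w_h$ from that for $v_h$, and writing $z_h := T(v_h) - T(w_h) \in V^h_0$, one gets
\[
B[v_h - w_h - z_h, \psi_h] = \eps(\Delta(v_h - w_h), \Delta\psi_h) - \bigl(\det(D^2 v_h) - \det(D^2 w_h), \psi_h\bigr) \quad \forall \psi_h \in V^h_0,
\]
which rearranges to $B[z_h, \psi_h] = \bigl(\det(D^2 v_h) - \det(D^2 w_h), \psi_h\bigr)$, since the $\eps(\Delta(v_h-w_h),\Delta\psi_h)$ terms cancel against the part of $B[v_h - w_h, \psi_h]$. (Here I use that $B[v,w] = \eps(\Delta v, \Delta w) + (\Phi^\eps Dv, Dw)$, so the difference reduces to the gradient term against $\det$ differences; I will need to be careful about exactly which terms survive, but the key point is that the biharmonic contributions match up and the residual is controlled purely by the determinant nonlinearity.) Then I would set $\psi_h = z_h$ and use coercivity of $B$ with constant $C_2(\eps) = O(\eps)$.

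Next I would handle the determinant difference. By the Mean Value Theorem, just as in the proof of Lemma \ref{lem51},
\[
\det(D^2 v_h) - \det(D^2 w_h) = \tilde\Phi : D^2(v_h - w_h),
\]
where $\tilde\Phi = \text{cof}(\tau D^2 v_h + (1-\tau) D^2 w_h)$ for some $\tau \in [0,1]$. The crucial estimate is $\|\tilde\Phi\|_{L^\infty} = O(\eps^{-(n-1)})$, which I would obtain exactly as in Lemma \ref{lem51}: for $v_h, w_h \in B_h(\rho_0)$ with $\rho_0 < 1$, one has $\|D^2 v_h\|_{L^\infty}, \|D^2 w_h\|_{L^\infty} \le \|D^2 I_h u^\eps\|_{L^\infty} + \rho_0 \cdot (\text{inverse estimate factor})$, which is bounded by $C\|u^\eps\|_{W^{2,\infty}} = O(\eps^{-1})$ in $2$-D (and $O(\eps^{-2})$ in $3$-D after accounting for the cofactor structure). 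This gives
\[
C_2(\eps)\|z_h\|_{H^2}^2 \le B[z_h, z_h] \le \|\tilde\Phi\|_{L^\infty}\|D^2(v_h - w_h)\|_{L^2}\|z_h\|_{L^2} \le \frac{C}{\eps^{n-1}}\|v_h - w_h\|_{H^2}\|z_h\|_{H^2},
\]
hence $\|T(v_h) - T(w_h)\|_{H^2} \le \dfrac{C}{\eps^{n-1} C_2(\eps)}\|v_h - w_h\|_{H^2} = C_8(\eps)\|v_h - w_h\|_{H^2}$ with $C_8(\eps) = O(\eps^{-n})$.

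Finally, since $C_8(\eps)$ is a fixed constant depending only on $\eps$ (not on $h$), this is not yet a contraction with factor $\tfrac12$ — and here is where I expect the subtlety. The resolution must be that the effective Lipschitz constant picks up an extra factor that is small when $h$ is small: I would need to re-examine the bound and extract from $\|D^2(v_h - w_h)\|_{L^2}$ versus $\|v_h - w_h\|_{H^2}$ nothing extra, but rather note that one of the two factors ($\|z_h\|_{L^2}$ or $\|D^2(v_h-w_h)\|$) can instead be estimated in a weaker norm and converted using an inverse inequality $\|\cdot\|_{H^2} \le C h^{-1}\|\cdot\|_{H^1}$ — no, that goes the wrong way. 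The honest mechanism is presumably that $\tilde\Phi$ must be split as $\text{cof}(D^2 I_h u^\eps)$ plus a remainder of size $O(\rho_0 \cdot \eps^{-(n-1)})$ or $O(h^{\ell - 2}\eps^{-(n-1)})$, and the main term $\text{cof}(D^2 I_h u^\eps) : D^2(v_h - w_h)$ gets paired against $z_h$ in a way that, combined with the error equation structure, absorbs into $B$ itself up to a genuinely small factor; alternatively, one restricts to $h \le h_0(\eps)$ small enough that $C_8(\eps) h^{\ell-2}\|u^\eps\|_{H^\ell}$-type terms are beaten down. I would therefore choose $\rho_0$ first so that the remainder term in $\tilde\Phi$ contributes at most $\tfrac14 C_2(\eps)$, then choose $h_0$ depending on $\eps$ and $\rho_0$ so the $I_h$-interpolation discrepancy between $v_h, w_h$ and their common reference point is likewise controlled, yielding \eqref{e4.3}. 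Pinning down this chain of dependencies — in particular showing the contraction factor can be made exactly $\tfrac12$ uniformly over the ball — is the main obstacle, since it is precisely where the strong $\eps$-dependence of all constants has to be tamed by the smallness of $h$.
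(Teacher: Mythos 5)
There is a genuine gap, and it starts with the algebra of the very first step. Subtracting the two defining equations \eqref{Toperator} gives $B[v_h-w_h-z_h,\psi_h]=\eps(\Delta(v_h-w_h),\Delta\psi_h)-(\det(D^2v_h)-\det(D^2w_h),\psi_h)$; since $B[v_h-w_h,\psi_h]=\eps(\Delta(v_h-w_h),\Delta\psi_h)+(\Phi^\eps D(v_h-w_h),D\psi_h)$, the correct identity is
\[
B[T(v_h)-T(w_h),\psi_h]=(\Phi^\eps D(v_h-w_h),D\psi_h)+(\det(D^2v_h)-\det(D^2w_h),\psi_h),
\]
not $B[z_h,\psi_h]=(\det(D^2v_h)-\det(D^2w_h),\psi_h)$ as you wrote: the gradient term $(\Phi^\eps D(v_h-w_h),D\psi_h)$ does \emph{not} cancel, and it is precisely the term you discarded that saves the argument. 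Estimating the determinant difference alone by the Mean Value Theorem, as you do, can only produce a Lipschitz constant of size $O(\eps^{-n})$, which (as you yourself observe) is not a contraction for any choice of $h$; no amount of tuning $\rho_0$ and $h_0$ repairs this, because the residual you are left with is not small in $h$ or $\rho_0$ at all.

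The mechanism in the paper is the near-cancellation between the two terms above: writing $\det(D^2v_h)-\det(D^2w_h)=\Psi_h:D^2(v_h-w_h)$ with $\Psi_h=\mathrm{cof}(D^2v_h^\mu+\tau(D^2w_h^\mu-D^2v_h^\mu))$ (the mollifications $v_h^\mu,w_h^\mu$ are introduced so that the Mean Value Theorem and the row divergence-free property of cofactor matrices, Lemma \ref{lem3.1}, can be applied and then $\mu\to0$), one integrates by parts to convert this term into $-(\Psi_h D(v_h-w_h),D\psi_h)$ plus mollification errors, so that the right-hand side collapses to $((\Phi^\eps-\Psi_h)D(v_h-w_h),D\psi_h)$. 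The smallness now comes from $\|\Phi^\eps-\Psi_h\|_\lt\lesssim \eps^{-(n-2)}\bigl(h^{\ell-2}\|u^\eps\|_{H^\ell}+\rho_0\bigr)$, since $v_h,w_h\in B_h(\rho_0)$ and $I_hu^\eps$ is close to $u^\eps$; combined with coercivity (constant $C_2(\eps)=O(\eps)$) this yields a contraction factor $\le\frac12$ once $\rho_0\sim C_2(\eps)\eps^{n-2}/C$ and $h_0\sim\bigl(C_2(\eps)\eps^{n-2}/(C\|u^\eps\|_{H^\ell})\bigr)^{1/(\ell-2)}$. Your closing paragraph guesses at essentially this splitting (``the main term \dots absorbs into $B$ itself''), but you never carry it out, and you cannot from your stated identity, because the $(\Phi^\eps D(v_h-w_h),D\psi_h)$ term needed for the cancellation was dropped at the outset. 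Also note the paper measures $\Phi^\eps-\Psi_h$ in $\lt$ and pairs it with $\|v_h-w_h\|_\htw\|\psi_h\|_\htw$ via Sobolev embedding, rather than attempting the $L^\infty$/inverse-estimate route you sketch, which would introduce an unwanted negative power of $h$.
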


\begin{proof}
For any $\psi_h\in V^h_0$, using the definition of $T(v_h)$ and $T(w_h)$
we get
\begin{align*}
B[T(v_h)-T(w_h),\psi_h] 
&=\left(\Phi^\eps D(v_h-w_h),D\psi_h\right)+
\left(\text{det}(D^2v_h)-\text{det}(D^2w_h),\psi_h\right).
\end{align*}

Let $v_h^\mu,\ w_h^\mu$ denote the standard mollifications of $v_h$ and
$w_h$, respectively. Adding and subtracting these terms and using
the Mean Value Theorem yield
\begin{align*}
&B[T(v_h)-T(w_h),\psi_h] \\
&\quad =(\Phi^\eps(Dv_h-Dw_h),D\psi_h)+(\text{det}(D^2v_h)
-\text{det}(D^2w_h),\psi_h)\\
&\quad =(\Phi^\eps(Dv_h-Dw_h),D\psi_h)+(\text{det}(D^2v^\mu_h)
-\text{det}(D^2w^\mu_h),\psi_h)\\
&\hspace{1.75cm}+(\text{det}(D^2v_h)-\text{det}(D^2v_h^\mu),\psi_h)+(\text{det}(D^2w^\mu_h)-\text{det}(D^2w_h),\psi_h)\\
&\quad =(\Phi^\eps(Dv_h-Dw_h),D\psi_h)+(\Psi_h:(D^2v^\mu_h-D^2w^\mu_h),\psi_h)\\
&\hspace{1.75cm}+(\text{det}(D^2v_h)-\text{det}(D^2v_h^\mu),\psi_h)
+(\text{det}(D^2w^\mu_h)-\text{det}(D^2w_h),\psi_h),
\end{align*}
where $\Psi_h=\text{cof}(D^2v_h^\mu+\tau(D^2w_h^\mu-D^2v_h^\mu)),\ \tau\in [0,1]$.

Using Lemma $\ref{lem3.1}$ we have
\begin{align*}
&B[T(v_h)-T(w_h),\psi_h]\\
&\quad =((\Phi^\eps-\Psi_h)(Dv_h-Dw_h),D\psi_h)
+(\Psi_h(Dv_h-Dv_h^\mu),D\psi_h)\\
&\qquad +(\Psi_h(Dw_h^\mu-Dw_h),\psi_h)+(\text{det}(D^2v_h)
-\text{det}(D^2v_h^\mu),\psi_h)\\
&\qquad +(\text{det}(D^2w^\mu_h)-\text{det}(D^2w_h),\psi_h)\\
&\quad
\le C\Bigl\{ \|\Phi^\eps-\Psi_h\|_\lt\|v_h-w_h\|_\htw \|\psi_h\|_\htw
+\|\Psi_h\|_\lt \|\psi_h\|_\htw \bigl[\|v_h-v_h^\mu\|_\htw\\
&\qquad +\|w_h-w_h^\mu\|_\htw \bigr]
+\bigl[\|\text{det}(D^2v_h)-\text{det}(D^2v_h^\mu)\|_\lt\\
&\qquad +\|\text{det}(D^2w_h)-\text{det}(D^2w_h^\mu)\|_\lt \bigr] \|\psi_h\|_\lt \Bigr\},
\end{align*}
where we have used Sobolev's inequality.

Next, we derive an upper bound for $\|\Phi^\eps-\Psi_h\|_\lt$ when
n=2 as follows:
\begin{align*}
\|\Phi^\eps-\Psi_h\|_\lt&=\|\Phi^\eps-\text{cof}(D^2v_h^\mu
+\tau(D^2w_h^\mu-D^2v_h^\mu))\|_\lt\\
&=\|D^2u^\eps-(D^2v^\mu_h+\tau(D^2w_h^\mu-D^2v_h^\mu))\|_\lt\\
&\le \|D^2u^\eps-D^2(I_hu^\eps)\|_\lt + \|D^2(I_hu^\eps)-D^2v_h\|_\lt\\
&\quad + 2\|D^2v_h-D^2v_h^\mu\|_\lt
+\|D^2w_h-D^2w_h^\mu\|_\lt+\|D^2v_h-D^2w_h\|\\
&\le Ch^{\ell-2}\|u^\eps\|_{H^\ell}+ 2\rho_0+ 2\|v_h-v_h^\mu\|_\htw +
\|w_h-w_h^\mu\|_\htw.
\end{align*}

Similarly, when n=3, we can show
\begin{align*}\|\Phi^\eps-\Psi_h\|_\lt&\le \frac{C}{\eps}\bigl(h^{\ell-2}\|u^\eps\|_{H^\ell}+ 2\rho_0+ 2\|v_h-v_h^\mu\|_\htw +
\|w_h-w_h^\mu\|_\htw\bigr).\end{align*}

Using this result above, we have
\begin{align*}
&B[T(v_h)-T(w_h),\psi_h]\\
&\quad \le
\frac{C}{\eps^{n-2}}\Bigl\{\bigl[h^{\ell-2}\|u^\eps\|_{H^\ell}
+ \rho_0+ \|v_h-v_h^\mu\|_\htw + \|w_h-w_h^\mu\|_\htw\bigr]\|v_h-w_h\|_\htw \\
&\qquad +\|\Psi_h\|_\lt \bigl[\|v_h-v_h^\mu\|_\htw
+\|w_h-w_h^\mu\|_\htw\bigr]\\
&\qquad +\bigl[\|\text{det}(D^2v_h)-\text{det}(D^2v_h^\mu)\|_\lt
+\|\text{det}(D^2w_h)-\text{det}(D^2w_h^\mu)\|_\lt\bigr]\Bigr\}\|\psi_h\|_\htw.
\end{align*}
Setting $\mu\to 0$ yields
\[
B[T(v_h-T(w_h),\psi_h]\le
\frac{C}{\eps^{n-2}}\left(h^{\ell-2}\|u^\eps\|_{H^\ell} +
\rho_0\right)\|v_h-w_h\|_\htw\|\psi_h\|_\htw.
\]
Using the coercivity of the bilinear form $B[\cdot,\cdot]$ we get
\[
\|T(v_h)-T(w_h)\|_\htw\le
\frac{C}{C_2(\eps)\eps^{n-2}}\left(h^{\ell-2}\|u^\eps\|_{H^\ell}
+\rho_0\right)\|v_h-w_h\|_\htw.
\]
Choosing $\rho_0=\frac{C_2(\eps)\eps^{n-2}}{C}$ and
$h_0=\left(\frac{C_2(\eps)\eps^{n-2}}{C\|u^\eps\|_{H^\ell}}\right)^{\frac{1}{\ell-2}}$,
then for $h\le h_0$ we have
\[
\|T(v_h)-T(w_h)\|_\htw\le \frac12 \|v_h-w_h\|_\htw.
\]
The proof is complete.
\end{proof}


\begin{thm}\label{maintheorem}
Let $\rho_1=2C_7(\eps)h^{\ell-2}\|u^\eps\|_{H^\ell}.$ Then there exists
an $h_1>0$ such that for $h\le \text{\rm min}\{h_0,h_1\}$, there
exists a unique solution $u^\eps_h$ of \eqref{galmethod} in the
ball $B_h(\rho_1)$. Moreover, there exists a constant
$C_8(\eps)=O\left(\eps^{-n}\right)$ such that
\begin{equation} \label{e4.4}
\|u^\eps-u^\eps_h\|_{H^2}\le C_8(\eps) h^{\ell-2}\|u^\eps\|_{H^\ell},\quad
\ell=\text{\rm min}\{r+1,s\}.
\end{equation}
\end{thm}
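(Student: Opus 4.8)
The plan is to combine the contraction-mapping result of Lemma~\ref{lem52} with the residual estimate of Lemma~\ref{lem51} via Banach's fixed point theorem, and then convert the resulting bound on $\|u^\eps_h - I_h u^\eps\|_{H^2}$ into the desired estimate on $\|u^\eps - u^\eps_h\|_{H^2}$ through the triangle inequality and the interpolation estimate. First I would verify that $T$ maps the ball $B_h(\rho_1)$ into itself: for $v_h \in B_h(\rho_1)$, write
\[
\|T(v_h) - I_h u^\eps\|_{H^2} \le \|T(v_h) - T(I_h u^\eps)\|_{H^2} + \|T(I_h u^\eps) - I_h u^\eps\|_{H^2}.
\]
By Lemma~\ref{lem52}, provided $h \le h_0$ and $\rho_1 < \rho_0$, the first term is bounded by $\tfrac12 \|v_h - I_h u^\eps\|_{H^2} \le \tfrac12 \rho_1$; by Lemma~\ref{lem51} the second term is bounded by $C_7(\eps) h^{\ell-2}\|u^\eps\|_{H^\ell} = \tfrac12 \rho_1$. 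Hence $\|T(v_h) - I_h u^\eps\|_{H^2} \le \rho_1$, so $T(B_h(\rho_1)) \subset B_h(\rho_1)$.

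Next I would record that $B_h(\rho_1)$, being a closed ball in the finite-dimensional (hence complete) space $V^h_g$ equipped with the $H^2$-norm, is a complete metric space, and that $T$ restricted to it is a contraction with factor $\tfrac12$ by Lemma~\ref{lem52}. Banach's fixed point theorem then yields a unique fixed point $u^\eps_h \in B_h(\rho_1)$, which as noted in the text immediately before Lemma~\ref{lem51} is precisely a solution of \eqref{galmethod}. For global uniqueness of the solution within $B_h(\rho_1)$ I just invoke the uniqueness part of Banach's theorem; the statement only claims uniqueness in that ball. The constraint $\rho_1 < \rho_0$ is what forces the introduction of a second threshold: since $\rho_1 = 2C_7(\eps) h^{\ell-2}\|u^\eps\|_{H^\ell} \to 0$ as $h \to 0$, there is an $h_1 > 0$ with $\rho_1 < \rho_0$ for all $h \le h_1$, and one takes $h \le \min\{h_0, h_1\}$.

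Finally, to obtain \eqref{e4.4}, I use the triangle inequality and the fact that $u^\eps_h \in B_h(\rho_1)$:
\[
\|u^\eps - u^\eps_h\|_{H^2} \le \|u^\eps - I_h u^\eps\|_{H^2} + \|I_h u^\eps - u^\eps_h\|_{H^2} \le C h^{\ell-2}\|u^\eps\|_{H^\ell} + \rho_1,
\]
where the first term uses the interpolation estimate for $V^h$ (with $\ell = \min\{r+1, s\}$) and the second is $2C_7(\eps) h^{\ell-2}\|u^\eps\|_{H^\ell}$. Collecting constants gives $C_8(\eps) = O(\eps^{-n})$, inheriting the order from $C_7(\eps)$. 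I do not expect any serious obstacle here: the two lemmas do all the analytic work, and what remains is the bookkeeping of the fixed-point argument and the choice of $h_1$. The one point needing a little care is checking the compatibility $\rho_1 < \rho_0$ and that the self-mapping computation closes exactly (the two halves each contributing $\tfrac12\rho_1$), which is why $\rho_1$ is defined with the factor $2$ in front of $C_7(\eps) h^{\ell-2}\|u^\eps\|_{H^\ell}$.
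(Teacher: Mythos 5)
Your proposal is correct and follows essentially the same route as the paper: the self-mapping property of $T$ on $B_h(\rho_1)$ is obtained by the identical splitting into $\|T(v_h)-T(I_h u^\eps)\|_{H^2}+\|T(I_h u^\eps)-I_h u^\eps\|_{H^2}$ bounded via Lemmas \ref{lem52} and \ref{lem51}, a fixed point theorem then gives the unique solution in the ball (the paper invokes Brouwer, but since $T$ is a contraction your use of Banach is equally valid and yields uniqueness more directly), and the error bound \eqref{e4.4} follows from the same triangle inequality combining the interpolation estimate with $\rho_1$. The only detail the paper makes explicit that you leave implicit is the concrete choice $h_1=\bigl(\frac{C\eps^{2n-1}}{\|u^\eps\|_{H^\ell}}\bigr)^{\frac{1}{\ell-2}}$ guaranteeing $\rho_1\le\rho_0$, but your observation that $\rho_1\to 0$ as $h\to 0$ suffices for the same conclusion.
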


\begin{proof}
Let $v_h\in B_h(\rho_1)$ and
$h_1=\bigl(\frac{C\eps^{2n-1}}{\|u^\eps\|_{H^\ell}}\bigr)^{\frac{1}{\ell-2}}$.
Then $h\le \text{\rm min}\{h_0,h_1\}$ implies that $\rho_1\le
\rho_0$.  Thus, using the triangle inequality and Lemmas
$\ref{lem51}$ and \ref{lem52} we have
\begin{align*}
\|I_h u^\eps - T(v_h)\|_{H^2}
&\le \|I_h u^\eps-T(I_hu^\eps)\|_{H^2}+ \|T(I_h u^\eps )-T(v_h)\|_{H^2}\\
&\le C_7(\eps)h^{\ell-2}\|u\|_{H^\ell}+\frac12 \|I_hu^\eps-v_h\|_{H^2}\le
\frac{\rho_1}{2}+\frac{\rho_1}{2}=\rho_1.
\end{align*}
Hence, $T(v_h)\in B_h(\rho_1)$.  In addition, from Lemma \ref{lem52}
we know that $T$ is a contracting mapping.  Thus, the Brouwer fixed
Theorem \cite{Gilbarg_Trudinger01} infers that $T$ has a unique fixed
point $u^\eps_h\in B_h(\rho_1)$, which is the unique solution to \eqref{galmethod}.

To get the error estimate, we use the triangle inequality to get
\begin{align*}
\|u^\eps-u^\eps_h\|_{H^2} &\le \|u^\eps-I_h u^\eps\|_{H^2}+\|I_hu^\eps-u^\eps_h\| \\
&\le Ch^{\ell-2}\|u\|_{H^\ell}+\rho_1=C_8(\eps)h^{\ell-2}\|u\|_{H^\ell},
\end{align*}
where
$C_8(\eps):=CC_7(\eps)=O\left(\eps^{-n}\right)$. The proof is complete.
\end{proof}


\begin{thm}
In addition to the hypothesis of Theorem \ref{maintheorem}, assume
that the linearized equation is $H^4$-regular with the regularity
constant $C_s(\vepsi)$.  Then there holds
\begin{equation} \label{e4.5}
\|u^\eps-u^\eps_h\|_{L^2}\le C_{9}(\eps)\Bigl[
\frac{h^\ell}{\sqrt{\vepsi}} \|u^\eps\|_{H^\ell}
+\eps^{2-n}C_8(\eps) h^{2\ell-4}\|u^\eps\|_{H^\ell}^2 \Bigr],
\end{equation}
where $C_9(\vepsi)= C_s(\vepsi) C_8(\vepsi)$.
\end{thm}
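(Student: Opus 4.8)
The plan is to establish the $L^2$ error estimate via a duality argument that parallels the one used in Theorem \ref{existence} for the linearized problem, but now accounting for the nonlinearity of the Monge-Amp\`ere operator. First I would introduce the auxiliary dual problem: let $e:=u^\eps-u^\eps_h$ and let $\psi\in V_0$ solve $L_{u^\eps}(\psi)=e$ in $\Omega$, $\psi=0$ and $\Delta\psi=0$ on $\partial\Omega$. By the assumed $H^4$-regularity of the linearized operator (Theorem \ref{linbound2} with $s=4$), we have $\|\psi\|_{H^4}\le C_s(\eps)\|e\|_{L^2}$. Then $\|e\|_{L^2}^2=(e,e)=(L_{u^\eps}(\psi),e)=B[e,\psi]$, using that $L_{u^\eps}$ is self-adjoint and the definition of the bilinear form $B[\cdot,\cdot]$ together with the homogeneous boundary data on $\psi$.

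Next I would split $B[e,\psi]=B[e,\psi-I_h\psi]+B[e,I_h\psi]$. For the first term, Galerkin-orthogonality is \emph{not} exact here because $e$ solves the nonlinear equation, not the linearized one; instead I would use the error equations satisfied by $u^\eps$ and $u^\eps_h$ to write, for any $w_h\in V^h_0$,
\[
B[e,w_h]=\eps(\Delta e,\Delta w_h)+(\Phi^\eps De,Dw_h)
=\bigl(\det(D^2u^\eps)-\det(D^2u^\eps_h),w_h\bigr)+(\Phi^\eps De,Dw_h),
\]
and then apply the Mean Value Theorem to $\det(D^2u^\eps)-\det(D^2u^\eps_h)=\widetilde\Phi:D^2 e$ with $\widetilde\Phi$ a cofactor matrix evaluated at an intermediate Hessian, integrate by parts using Lemma \ref{lem3.1}, so that $B[e,w_h]=\bigl((\Phi^\eps-\widetilde\Phi)De,Dw_h\bigr)$. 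The quantity $\|\Phi^\eps-\widetilde\Phi\|_{L^\infty}$ (or its $L^2$ analogue, as in Lemma \ref{lem52}) is controlled by $\eps^{2-n}\|D^2 e\|_{L^2}\lesssim\eps^{2-n}\|e\|_{H^2}$, which by Theorem \ref{maintheorem} is $O(\eps^{2-n}C_8(\eps)h^{\ell-2}\|u^\eps\|_{H^\ell})$. Taking $w_h=I_h\psi$ gives the quadratic-in-the-error term: the factor $\|e\|_{H^2}$ from $\|\Phi^\eps-\widetilde\Phi\|$, another $\|e\|_{H^2}$ from $\|De\|_{L^2}$, and $\|D(I_h\psi)\|_{L^2}\lesssim\|\psi\|_{H^2}\lesssim C_s(\eps)\|e\|_{L^2}$; collecting powers of $h$ and $\eps$ and dividing through by $\|e\|_{L^2}$ produces the second term $\eps^{2-n}C_8(\eps)h^{2\ell-4}\|u^\eps\|_{H^\ell}^2$ in \eqref{e4.5}.

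For the remaining term $B[e,\psi-I_h\psi]$, I would bound it by $\frac{C}{\sqrt\eps}\|e\|_{H^2}\|\psi-I_h\psi\|_{H^2}\le\frac{C}{\sqrt\eps}h^2\|e\|_{H^2}\|\psi\|_{H^4}\le\frac{C}{\sqrt\eps}h^2 C_s(\eps)\|e\|_{H^2}\|e\|_{L^2}$, and then insert the $H^2$ estimate from Theorem \ref{maintheorem}, $\|e\|_{H^2}\le C_8(\eps)h^{\ell-2}\|u^\eps\|_{H^\ell}$; after dividing by $\|e\|_{L^2}$ this yields a contribution of order $C_s(\eps)C_8(\eps)\,\eps^{-1/2}h^\ell\|u^\eps\|_{H^\ell}$, which accounts for the first term in \eqref{e4.5} with the constant $C_9(\eps)=C_s(\eps)C_8(\eps)$. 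Adding the two contributions and absorbing constants gives the stated estimate.

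The main obstacle I anticipate is the handling of the nonlinear residual: unlike in the purely linear setting of Theorem \ref{existence}, the identity $B[e,w_h]=0$ fails, and one must carefully exploit the cofactor structure — rewriting the difference of determinants through the Mean Value Theorem and invoking the divergence-free property (Lemma \ref{lem3.1}) — to recast $B[e,w_h]$ as $((\Phi^\eps-\widetilde\Phi)De,Dw_h)$, which is the term that is genuinely small (second order in the error). Getting the $\eps$-powers right in $\|\Phi^\eps-\widetilde\Phi\|$ for both $n=2$ and $n=3$, and tracking them consistently through the duality argument so that they combine into the advertised $C_9(\eps)$ and $\eps^{2-n}C_8(\eps)$, is the delicate bookkeeping part; the mollification device of Lemma \ref{lem52} may again be needed since $u^\eps_h$ need not be smooth enough for the pointwise Mean Value Theorem argument on $D^2$ to apply directly.
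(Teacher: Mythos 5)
Your proposal follows essentially the same route as the paper's proof: the same dual problem $L_{u^\eps}(\psi)=e$ with the assumed $H^4$-regularity, the same splitting into $\psi-I_h\psi$ and $I_h\psi$ parts, and the same use of the error equation together with the Mean Value Theorem, the divergence-free cofactor property (Lemma \ref{lem3.1}) and mollification of $u^\eps_h$ to reduce the consistency term to $((\Phi^\eps-\widetilde\Phi)De,D(I_h\psi))$, bounded via $\|\Phi^\eps-\widetilde\Phi\|\lesssim \eps^{2-n}\|e\|_{H^2}$ and the $H^2$ estimate of Theorem \ref{maintheorem}. The resulting bookkeeping of $h$- and $\eps$-powers matches the paper's, so the argument is correct and not materially different.
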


\begin{proof}
Let $e_h^\eps:=\ue-\uh$ and $\um$ denote a standard mollification
of $u_h^\eps$. It is easy to verify that $e_h^\eps$ satisfies
the following error equation:
\[
\eps(\Delta e_h^\eps,\Delta \psi_h)+(\text{det}(D^2u_h^\eps)
-\text{det}(D^2u^\eps),\psi_h)=0\spa \forall \psi_h\in V^h_0.
\]
Using the Mean Value Theorem and Lemma \ref{lem3.1} we have
\begin{align*}
0&=\eps(\Delta e_h^\eps,\Delta\psi_h)+(\text{det}(D^2\um)
-\text{det}(D^2\ue),\psi_h)+(\text{det}(D^2\uh)-\text{det}(D^2\um),\psi_h)\\
&=\eps(\Delta e_h^\eps,\Delta\psi_h)-(\tilde{\Phi}D(\um-\ue),D\psi_h)
+(\text{det}(D^2\uh)-\text{det}(D^2\um),\psi_h),
\end{align*}
where $\tilde{\Phi}=\text{cof}(D^2\um+\tau(D^2\ue-D^2\um)),\ \tau\in [0,1]$.

Next, The $H^4$-regular assumption implies that (cf. Theorem $\ref{linbound}$)
there exists a unique solution $\psi$ to the following problem:
\begin{alignat*}{2}
L_{u^\epsilon}(\psi)&=e_h^\eps &&\qquad \text{in}\ \Omega,\\
\psi&=0 &&\qquad \text{on}\ \partial\Omega, \\
\Delta\psi &=0  &&\qquad \text{on}\ \partial\Omega.
\end{alignat*}
Moreover, there holds
\begin{equation} \label{h4regular}
\|\psi\|_{H^4}\le C_s(\eps)\|e_h^\eps\|_\lt.
\end{equation}
Thus,
\begin{align*}
\|e_h^\eps\|_\lt^2
&=\langle e_h^\eps,e_h^\eps\rangle=\eps(\Delta e_h^\eps,\Delta \psi)
+(\Phi^\eps D\psi,D e_h^\eps)\\
&=\eps(\Delta e_h^\eps,\Delta (\psi-I_h\psi))+(\Phi^\eps De^\eps_h,D(\psi-I_h\psi)
+\eps(\Delta e_h^\eps,\Delta (I_h\psi))\\
&\qquad +(\Phi^\eps De_h^\eps,D(I_h\psi))-\eps(\Delta e_h^\eps,\Delta (I_h\psi_h))
-(\tilde{\Phi}D(\ue-\um),D(I_h\psi))\\
&\qquad -(\text{det}(D^2\uh) -\text{det}(D^2\um),I_h\psi)\\
&=\eps(\Delta e_h^\eps,\Delta (\psi-I_h\psi))+(\Phi^\eps De_h^\eps,D(\psi-I_h\psi))
\\
&\qquad +(\Phi^\eps De_h^\eps-\tilde{\Phi}D(\ue-\um),D(I_h\psi))
-(\text{det}(D^2\uh)-\text{det}(D^2\um),I_h\psi)\\
&=\eps(\Delta e_h^\eps,\Delta (\psi-I_h\psi))+(\Phi^\eps
De_h^\eps,D(\psi-I_h\psi))+((\Phi^\eps-\tilde{\Phi})De_h^\eps,D(I_h\psi))\\
&\qquad +(\tilde{\Phi}D(\um-\uh),D(I_h\psi))+(\text{det}(D^2\um)-\text{det}(D^2\uh),I_h\psi)\\
&\le \|\Delta e_h^\eps\|_\lt \|\Delta (\psi-I_h\psi)\|_\lt+ C\|\Phi^\eps\|_\lt
\|e_h^\eps\|_\htw\|\psi-I_h\psi\|_\htw\\
&\qquad +C\|\Phi^\eps-\tilde{\Phi}\|_\lt\|e_h^\eps\|_\htw\|I_h\psi\|_\htw+C\|\tilde{\Phi}\|_\lt\|\um-\uh\|_\htw\|I_h\psi\|_\htw\\
&\qquad +\|\text{det}(D^2\uh)-\text{det}(D^2\um)\|_\lt\|I_h\psi\|_\lt,
\end{align*}
where we have used Sobolev's inequality.  Continuing, we have
\begin{align}\label{e4.7}
\|e_h^\eps\|_\lt^2&\le C\Bigl\{ \eps h^2\|e_h^\eps\|_\htw
+h^2\|\Phi^\eps\|_\lt\|e_h^\eps\|_\htw
+\|\Phi^\eps-\tilde{\Phi}\|_\lt\|e_h^\eps\|_\htw\\
&\qquad +\|\tilde{\Phi}\|_\lt\|\um-\uh\|_\lt
+\|\text{det}(D^2\uh)-\text{det}(D^2\ue)\|_\lt \Bigr\} \|\psi\|_{H^4}. \nonumber
\end{align}

We now bound $\|\Phi^\eps-\tilde{\Phi}\|_\lt$ for n=2 as follows:
\begin{align} \label{e4.8}
\|\Phi^\eps-\tilde{\Phi}\|_\lt&=\|\text{cof}(D^2\ue)
-\text{cof}(D^2\um+\tau(D^2\ue-D^2\um))\|_\lt \\
&=\|D^2\ue-D^2\um+\tau(D^2\um-D^2\ue)\|_\lt \nonumber \\
&\le \|D^2\ue-D^2\uh\|_\lt + \|D^2\uh-D^2\um\|_\lt \nonumber \\
&\qquad +\|D^2\um-D^2\uh\|_\lt+\|D^2\uh-D^2\ue\|_\lt  \nonumber\\
&=2\|D^2\ue-D^2\uh\|_\lt+2\|D^2\uh-D^2\um\|_\lt. \nonumber
\end{align}

Similarly, when n=3, we have
\begin{align}\label{e4.8a}\|\Phi^\eps-\tilde{\Phi}\|_\lt&\le
\frac{C}{\eps}\bigl(\|D^2\ue-D^2\uh\|_\lt+\|D^2\uh-D^2\um\|_\lt\bigr).\end{align}

Substituting \eqref{e4.8}-\eqref{e4.8a} into \eqref{e4.7} we obtain
\begin{align*}
\|e_h^\eps\|_\lt^2&\le C\Big(\eps h^2\|e_h^\eps\|_\htw+
h^2\|\Phi^\eps\|_\lt\|e_h^\eps\|_\htw+
\eps^{2-n}(\|e_h^\eps\|_\htw+ \|\uh-\um\|_\htw)\|e_h^\eps\|_\htw\\
&\qquad + \|\tilde{\Phi}\|_\lt \|\um-\uh\|_\htw+
\|\text{det}(D^2\uh)-\text{det}(D^2\um)\|_\lt\Big)\|\psi\|_{H^4}.
\end{align*}
Setting $\mu\to 0$ and using \eqref{h4regular} yield
\begin{align*}
\|e_h^\eps\|_\lt^2&\le C\left(\eps h^2\|e_h^\eps\|_\htw+
h^2\|\Phi^\eps\|_\lt\|e_h^\eps\|_\htw+
\eps^{2-n}\|e_h^\eps\|_\htw^2\right)\|\psi\|_{H^4}\\
&\le C_s(\eps)\left(\eps  h^2\|e_h^\eps\|_\htw+
h^2\|\Phi^\eps\|_\lt\|e_h^\eps\|_\htw+
\eps^{2-n}\|e_h^\eps\|_\htw^2\right)\|e_h^\eps\|_{L^2}.
\end{align*}
Hence,
\begin{align*}
\|e_h^\eps\|_\lt&\le C_s(\eps)\bigl(\eps h^2\|e_h^\eps\|_\htw
+ h^2\|\Phi^\eps\|_\lt\|e_h^\eps\|_\htw+\eps^{2-n}\|e_h^\eps\|_\htw^2\bigr)\\
&\le C_s(\eps)\Bigl(\frac{h^2}{\sqrt{\eps}}\|e_h^\eps\|_\htw
+\eps^{2-n}\|e_h^\eps\|_\htw^2 \Bigr)\\
&\le C_s(\eps)\Bigl( h^\ell \frac{C_8(\eps)}{\sqrt{\eps}}\|u^\eps\|_{H^\ell}
+ \eps^{2-n}C_8(\eps)^2 h^{2\ell-4}\|u^\eps\|_{H^\ell}^2 \Bigr)\\
&= C_9(\eps)\Bigl[ \frac{h^\ell}{\sqrt{\vepsi}} \|u^\eps\|_{H^\ell}
+\eps^{2-n}C_8(\eps)h^{2\ell-4}\|\ue\|_{H^\ell} \Bigr],
\end{align*}
where $C_9:=C_s(\eps) C_8(\eps)$.
\end{proof}


\begin{remark}
It can be shown that the corresponding error
estimates for spectral Galerkin method \eqref{spectralmethod} are
\begin{align} \label{e4.9}
\|u^\vepsi-u^\vepsi_N\|_\htw &\leq   C_8(\eps) N^{2-\ell}\|u^\eps\|_{H^\ell}, \\
\|u^\eps-u^\eps_h\|_{L^2} &\le C_{9}(\eps)\Bigl[
N^{-\ell}\vepsi^{-\frac12} \|u^\eps\|_{H^\ell} +\eps^{2-n}C_8(\eps)
N^{4-2\ell}\|u^\eps\|_{H^\ell}^2 \Bigr], 
\end{align}
provided that $u^\vepsi\in H^s(\Ome)$. Where $\ell= \text{\rm min}\{N+1,s\}$. 
We refer the reader to \cite{Neilan_thesis} for a detailed proof.
\end{remark}

\section{Numerical Experiments and Rates of Convergence}\label{sec-5}

In this section, we provide several $2$-D numerical experiments to
gauge the efficiency of the finite element method developed in the
previous sections.  We numerically find the ``best'' choice of the
mesh size $h$ in terms of $\eps$, and rates of convergence for both
$u^0-u^\eps$ and $u^\eps-u^\eps_h$.  All tests given below are done
on the domain $\Omega=[0,1]^2$.  We refer the reader to
\cite{Feng2,Neilan_thesis} for more extensive $2$-D
and $3$-D numerical simulations.

\subsection*{Test 1:}
For this test, we calculate $\|u^0-u^\eps_h\|$ for fixed $h=0.009$,
while varying $\vepsi$ in order to approximate $\|u^\eps-u^0\|$.
We use Argyris elements and set to solve problem \eqref{galmethod}
with the following test functions
\begin{alignat*}{3}
&\text{(a). } u^0=e^{(x^2+y^2)/2},\quad
&&f=(1+x^2+y^2)e^{(x^2+y^2)/2},\quad &&g=e^{(x^2+y^2)/2}.\\
&\text{(b). } u^0=x^4+y^2,\quad &&f=24x^2,\quad
&&g=x^4+y^2.
\end{alignat*}

After having computed the error, we divide by various powers of
$\vepsi$ to estimate the rate at which each norm converges. Tables
$\ref{test1ab}$ and $\ref{test1bb}$ clearly show that
$\|u^0-u^\eps_h\|_\htw=O(\eps^{\frac14})$.  Since we have
fixed h very small, then $\|u^0-u^\eps\|_\htw\approx
\|u^0-u^\eps_h\|_\htw=O(\eps^{\frac14})$.  Based on this
heuristic argument, we predict that
$\|u^0-u^\eps\|_\htw=O(\eps^{\frac14})$.  Similarly, from
Tables \ref{test1ab} and \ref{test1bb}, we see that
$\|u^0-u^\eps\|_\lt\approx O(\eps)$ and $\|u^0-u^\eps\|_{H^1}\approx
O(\eps^\frac12)$.

\begin{table}[tbp]
\begin{center}
{\small \noindent\begin{tabular}{|l|l|l|l|}\hline
$\eps$ & $\|u_h^\eps-u^0\|_\lt$ & $\|u_h^\eps-u^0\|_{H^1}$ & $\|u_h^\eps-u^0\|_\htw$\\
\hline 0.75 &   0.109045862& 0.528560309 &3.39800721\\
\hline 0.5& 0.113340196& 0.548262711& 3.524120741\\
\hline 0.1& 0.08043631& 0.401646611& 3.071852861\\
\hline 0.075 &0.06932532& 0.352221521& 2.900677852\\
\hline 0.05& 0.053925875& 0.283684684 &2.657326288\\
\hline 0.025& 0.032202484& 0.18559903& 2.270039867\\
\hline 0.0125& 0.017972835& 0.117524466& 1.928506935\\
\hline 0.005& 0.007871272& 0.062721607& 1.544066061\\
\hline 0.0025& 0.004115832& 0.038522721& 1.301171395\\
\hline 0.00125& 0.002124611& 0.023464656& 1.095145652\\
\hline 0.0005&
0.00087474& 0.012073603& 0.871227869\\
\hline\end{tabular}} \caption{\label{test1a}{\scriptsize Test 1a:
Change of $\|u^0-u^\eps_h\|$ w.r.t. $\eps$ ($h=0.009$)}}
\end{center}
\end{table}

\begin{table}[tbp]
\begin{center}
{\small \noindent\begin{tabular}{|l|l|l|l|}\hline
$\eps$ & $\frac{\|u_h^\eps-u^0\|_\lt}{\vepsi}$ & $\frac{\|u_h^\eps-u^0\|_{H^1}}{\sqrt{\eps}}$ & $\frac{\|u_h^\eps-u^0\|_\htw}{\sqrt[4]{\eps}}$\\
\hline0.75 &   0.145394483 &0.610328873 &3.651396376\\
\hline0.5 &0.226680392 &0.775360561 &4.19090946\\
\hline0.1 &0.804363102 &1.270118105 &5.462612693\\
\hline0.075 &0.924337601 &1.286131149& 5.542863492\\
\hline0.05 &1.078517501 &1.268676476& 5.619560909\\
\hline0.025 &  1.288099375 &1.173831334 &5.708848032\\
\hline0.0125 &1.437826805 &1.051170776 &5.767580991\\
\hline0.005& 1.574254363& 0.887017474 &5.806619604\\
\hline0.0025 & 1.646332652& 0.770454411& 5.819015381\\
\hline0.00125 &1.699688442& 0.663680706& 5.82430863\\
\hline0.0005 &1.749480266 &0.53994796& 5.826251909\\
\hline
\end{tabular}} 
\caption{\label{test1ab}{\scriptsize Test 1a:
Change of $\|u^0-u^\eps_h\|$ w.r.t. $\eps$ ($h=0.009$)}}
\end{center}
\end{table}

\begin{table}[tbp]
\begin{center}
{\small \noindent\begin{tabular}{|l|l|l|l|}\hline
$\eps$ & $\|u_h^\eps-u^0\|_\lt$ & $\|u_h^\eps-u^0\|_{H^1}$ & $\|u_h^\eps-u^0\|_{H^2}$\\
\hline 0.75 &   0.179911089 &0.896016741 &5.98759668\\
\hline 0.5 & 0.177287901 &0.883816723& 5.982088348\\
\hline 0.1& 0.102586549& 0.549713562& 4.822739159\\
\hline 0.075 &0.085457592& 0.47264786& 4.537189438\\
\hline 0.05& 0.063960926& 0.374513017 &4.150185418\\
\hline 0.025 &0.036755952 &0.24464464& 3.552006757\\
\hline 0.0125& 0.020291198& 0.157714933& 3.032842066\\
\hline 0.005 &0.008967657& 0.087384209& 2.451390014\\
\hline 0.0025 &0.004761813& 0.055425626 &2.080704688\\
\hline 0.00125& 0.002502224& 0.034885527& 1.762183589\\
\hline 0.0005& 0.001054596& 0.018689724 &1.410593138\\
\hline 0.00025& 0.000544002& 0.011565172& 1.189359491\\
\hline 0.000125& 0.000279021& 0.007112& 0.999863491\\
\hline 0.00005& 0.000114659 &0.003700268 &0.787092117\\
\hline
\end{tabular}}
\caption{\label{test1b}{\scriptsize Test 1b:
Change of $\|u^0-u^\eps_h\|$ w.r.t. $\eps$ ($h=0.009$)}}
\end{center}
\end{table}

\begin{table}[tbp]
\begin{center}
{\small \noindent\begin{tabular}{|l|l|l|l|}\hline
$\eps$ & $\frac{\|u_h^\eps-u^0\|_\lt}{\vepsi}$ & $\frac{\|u_h^\eps-u^0\|_{H^1}}{\sqrt{\eps}}$ & $\frac{\|u_h^\eps-u^0\|_\htw}{\sqrt[4]{\eps}}$\\
\hline0.75  &  0.239881452 &1.034631013 &6.434091356\\
\hline0.5& 0.354575803 &1.249905596 &7.113942026\\
\hline0.1 &1.025865488 &1.738346916 &8.576177747\\
\hline0.075  & 1.139434558& 1.725865966& 8.670049892\\
\hline0.05 &1.279218511 &1.67487313&  8.776573597\\
\hline0.025 &  1.470238076& 1.547268561& 8.932824077\\
\hline0.0125&  1.623295808& 1.410645242 &9.070313375\\
\hline0.005 &1.793531488& 1.235799336 &9.218704868\\
\hline0.0025&  1.904725072 &1.108512517 &9.305194248\\
\hline0.00125 &2.001778889& 0.986711711& 9.371813749\\
\hline0.0005 &2.109192262& 0.835829887& 9.433204851\\
\hline0.00025 &2.176008824& 0.731445692& 9.458627896\\
\hline0.000125 &2.232164725& 0.636116593& 9.456125065\\
\hline0.00005& 2.293174219 &0.523296856 &9.360155452\\
\hline\end{tabular}} 
\caption{\label{test1bb} {\scriptsize Test 1a:
Change of $\|u^0-u^\eps_h\|$ w.r.t. $\eps$ ($h=0.009$)}}
\end{center}
\end{table}

\subsection*{Test 2}
The purpose of this test is to calculate the rate of convergence of
$||u^\eps-u^\eps_h||$ for fixed $\eps$ in various norms. As in Test
1, we use Argyris elements and solve problem \eqref{galmethod} with
boundary condition $\Delta u^\eps=\eps$ on $\partial\Omega$ being
replaced by $\Delta u^\eps=\phi^\eps$ on $\partial\Omega$.  We use the
following test functions:

\begin{alignat*}{2}
\text{(a). } &u^\eps=20x^6+y^6,\ \ &&f^\eps=18000x^4y^4-\eps(7200x^2+360y^2),\\
&g^\eps=20x^6+y^6,\ \ && \phi^\eps=600x^4+30y^4.\\
\text{(b.) } &u^\eps=x\text{sin}(x)+y\text{sin}(y),\quad
&&f^\eps=(2\text{cos}(x)-x\text{sin}(x))(2\text{cos}(y)-y*\text{sin}(y))\\
& &&\quad -\eps(x\text{sin}(x)-4\text{cos}(x)+y\text{sin}(y)-4\text{cos}(y)),\\
&g^\eps=x\text{sin}(x)+y\text{sin}(y),\quad
&&\phi^\eps=2\text{cos}(x)-x\text{sin}(x)+2\text{cos}(y)-y\text{sin}(y).
\end{alignat*} 

After recording the error, we divided each norm by the power of h
expected to be the convergence rate by the analysis in the previous
section. As seen by Tables $\ref{test2ab}$ and $\ref{test2bb}$, the
error converges quicker than anticipated in all the norms.

\begin{table}[tbp]
\begin{center}
{\small \noindent\begin{tabular}{|c|c|c|c|}\hline  $h$ &
$\|u^\eps-u^\eps_h\|_\lt$ & $\|u^\eps-u^\eps_h\|_{H^1}$ &
$\|u^\eps-u^\eps_h\|_\htw$\\
\hline
0.083333333& 4.09993E-05& 0.00268815 & 0.169852878\\
\hline 0.05&    1.08355E-06 &9.91661E-05& 0.011700487\\
\hline 0.030656967& 3.64957E-08& 5.42931E-06& 0.000986132\\
\hline0.023836565& 7.67076E-09& 1.29176E-06 &0.000312985\\
\hline 0.015988237& 4.51167E-10& 1.0898E-07 &4.04141E-05\\
\hline 0.012833175& 8.8807E-11& 2.43919E-08 &1.18929E-05\\
\hline
\end{tabular}}
\caption{\label{test2a}{\scriptsize Test 2a:
Change of $\|u^\eps-u^\eps_h\|$ w.r.t. $h$ ($\eps=0.001$)}}
\end{center}
\end{table}

\begin{table}[tbp]
\begin{center}
{\small \noindent\begin{tabular}{|c|c|c|c|}\hline $h$ &
$\frac{\|u^\eps-u^\eps_h\|_\lt}{h^6}$ &
$\frac{\|u^\eps-u^\eps_h\|_{H^1}}{h^5}$ &
$\frac{\|u^\eps-u^\eps_h\|_\htw}{h^4}$\\
\hline 0.083333333& 122.4232319& 668.897675&  3522.069268\\
\hline 0.05& 69.34721174& 317.3313846 &1872.077947\\
\hline
0.030656967& 43.96086573& 200.4928789 &1116.396482\\
\hline 0.023836565& 41.81926563& 167.8666007 &969.5028297\\
\hline 0.015988237& 27.01059961& 104.3140517& 618.4873284\\
\hline 0.012833175&
19.88119861& 70.07682598& 438.4809442\\
\hline
\end{tabular}}
\caption{\label{test2ab}{\scriptsize Test 2a: Change of 
$\|u^\eps-u^\eps_h\|$ w.r.t. $h$ ($\eps=0.001$)}}
\end{center}
\end{table}

\begin{table}[tbp]
\begin{center}
{\small \noindent\begin{tabular}{|c|c|c|c|}\hline $h$ &
$\|u^\eps-u^\eps_h\|_\lt$ & $\|u^\eps-u^\eps_h\|_{H^1}$ &
$\|u^\eps-u^\eps_h\|_\htw$ \\ \hline
0.083333333& 2.10771E-08& 4.91457E-07& 4.165E-05\\
\hline  0.05& 5.17295E-10& 1.90347E-08& 2.72117E-06\\
\hline
0.030656967& 1.77011E-11 &1.04548E-09 &2.39861E-07\\
\hline  0.023836565& 3.59463E-12& 2.62405E-10& 7.50769E-08
\\ \hline  0.015988237& 2.03078E-13& 2.23902E-11& 9.51713E-09\\
\hline  0.012833175& 3.64151E-14 &4.95624E-12&
2.69794E-09\\
\hline
\end{tabular}}
\caption{\label{test2b}{\scriptsize Test 2b:
Change of $\|u^\eps-u^\eps_h\|$ w.r.t. $h$ ($\eps=0.001$)}}
\end{center}
\end{table}

\begin{table}[tbp]\begin{center}
{\small \noindent\begin{tabular}{|c|c|c|c|}\hline $h$ &
$\frac{\|u^\eps-u^\eps_h\|_\lt}{h^6}$ &
$\frac{\|u^\eps-u^\eps_h\|_{H^1}}{h^5}$ &
$\frac{\|u^\eps-u^\eps_h\|_\htw}{h^4}$\\
\hline 0.083333333& 0.062935746& 0.122290283& 0.863654524\\
\hline 0.05&
0.033106867& 0.06091104&  0.435387754\\
\hline 0.030656967& 0.021321831& 0.038607272& 0.271545609\\
\hline 0.023836565& 0.019597137& 0.034099981& 0.232558269\\
\hline 0.015988237& 0.012157901& 0.021431653& 0.145647654\\
\hline 0.012833175&
0.008152235& 0.014239078& 0.099470776\\
\hline\end{tabular}}
\caption{\label{test2bb}{\scriptsize Test 2b:
Change of $\|u^\eps-u^\eps_h\|$ w.r.t. $h$ ($\eps=0.001$)}}
\end{center}
\end{table}

\subsection*{Test 3}
In this section, we fix a relation between $\vepsi$ and $h$ to
determine a "best" choice for $h$ in terms of $\vepsi$ such that the
global error $u^0-u^\eps_h$ is the same convergence rate as that of
$u^0-u^\eps$.  We solve problem \eqref{galmethod} with the following
test functions and parameters.

\begin{alignat*}{3}\text{(a). } &u^0=x^4+y^2,\quad  &&f=24x^2,\quad
&&g=x^4+y^2.\\
\text{(b). } &u^0=20x^6+y^6,\quad  &&f=18000x^4y^4,\quad
&&g=20x^6+y^6.\end{alignat*}

To see which relation gives the sought-after convergence rate, we
compare the data with a function, $y=\beta x^\alpha$, where
$\alpha=1$ in the $L^2$-case, $\alpha=\frac12$ in the $H^1$-case and
$\alpha=\frac14$ in the $H^2$-case. The constant, $\beta$, is
determined using a least squares fitting algorithm based on the data.

Figures \ref{fig5}--\ref{fig6} and \ref{fig1}--\ref{fig2} show that when
$h=\vepsi^{\frac12}$, $\|u^0-u^\eps_h\|_\htw\approx
O(\eps^{\frac14})$ and $\|u^0-u^\eps_h\|_\lt\approx
O(\eps)$. We can conclude from the data that the relation,
$h=\vepsi^{\frac12}$ is the "best choice" for h in terms of
$\vepsi$. It can also be seen from Figures \ref{fig3}--\ref{fig4}
that when $h=\eps$, $\|u^0-u^\eps_h\|_{H^1}\approx O(\eps^{\frac12})$.

\begin{figure}[h]
\begin{center}
\includegraphics[angle=0,width=6.25cm,height=5cm]{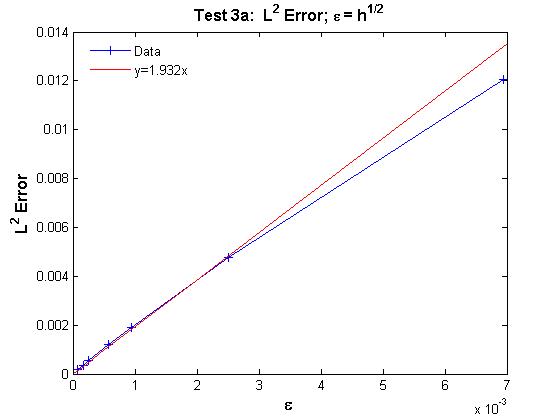}
\includegraphics[angle=0,width=6.25cm,height=5cm]{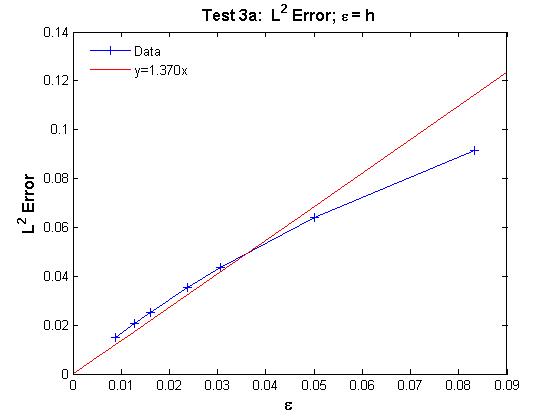}\\
\includegraphics[angle=0,width=6.25cm,height=5cm]{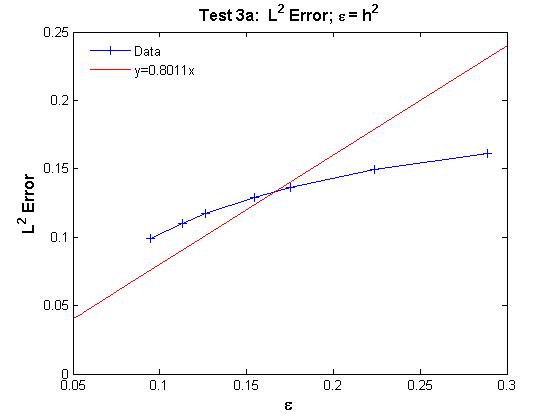}
\includegraphics[angle=0,width=6.25cm,height=5cm]{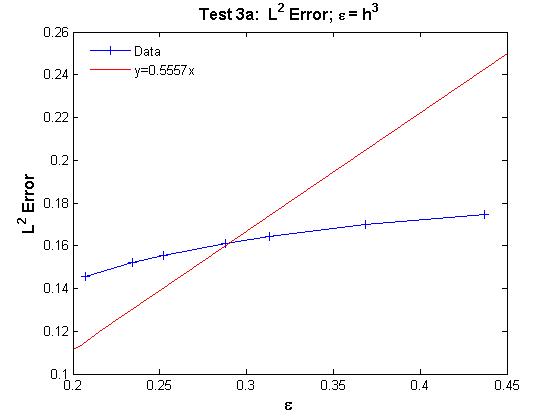}\end{center}
\caption{\label{fig1}{\scriptsize Test 3a.  $L^2$ Error}}\end{figure}

\begin{figure}[h]\begin{center}
\includegraphics[angle=0,width=6.25cm,height=5cm]{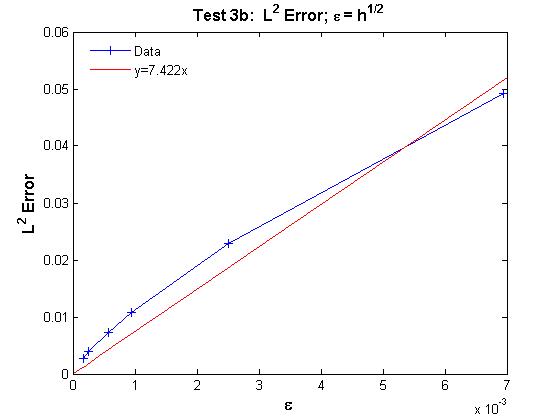}
\includegraphics[angle=0,width=6.25cm,height=5cm]{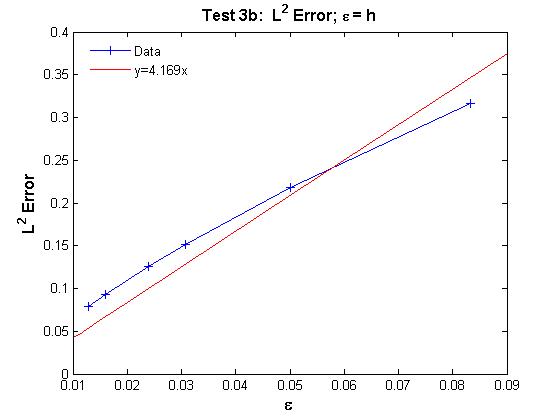}\\
\includegraphics[angle=0,width=6.25cm,height=5cm]{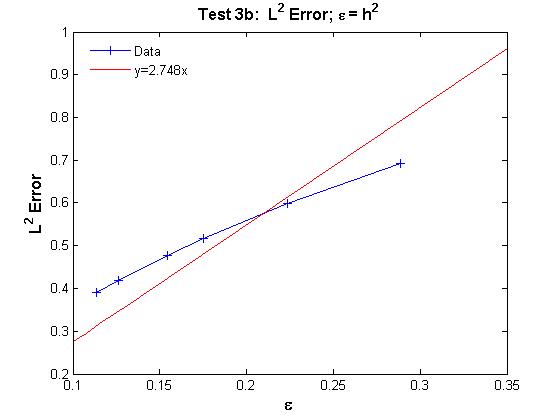}
\includegraphics[angle=0,width=6.25cm,height=5cm]{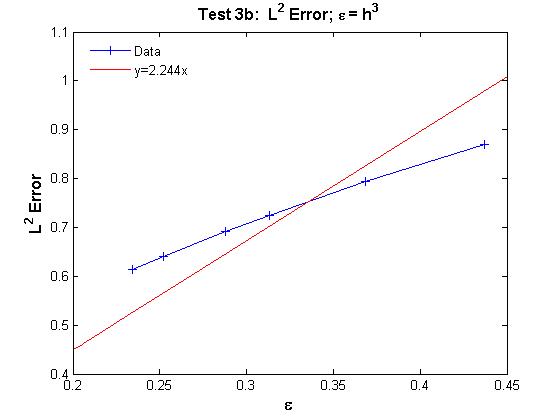}\end{center}
\caption{\label{fig2}{\scriptsize Test 3b.  $L^2$ Error}}\end{figure}

\begin{figure}[h]\begin{center}
\includegraphics[angle=0,width=6.25cm,height=5cm]{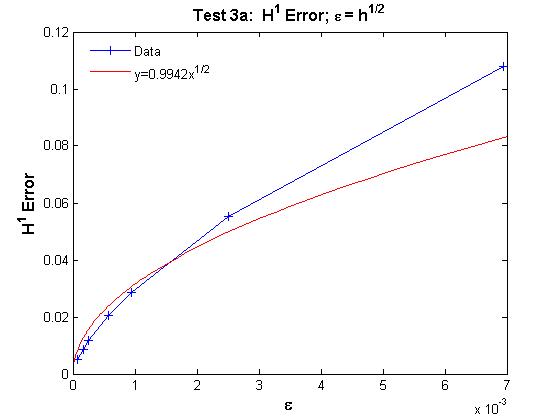}
\includegraphics[angle=0,width=6.25cm,height=5cm]{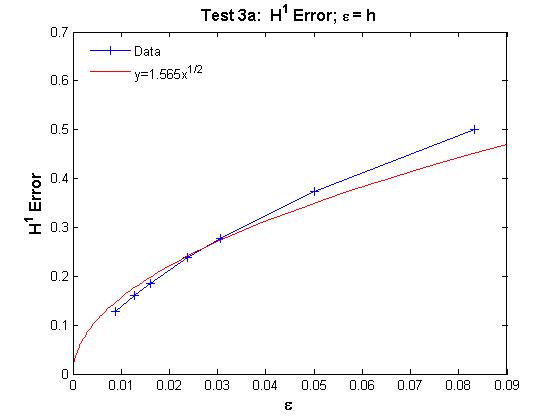}\\
\includegraphics[angle=0,width=6.25cm,height=5cm]{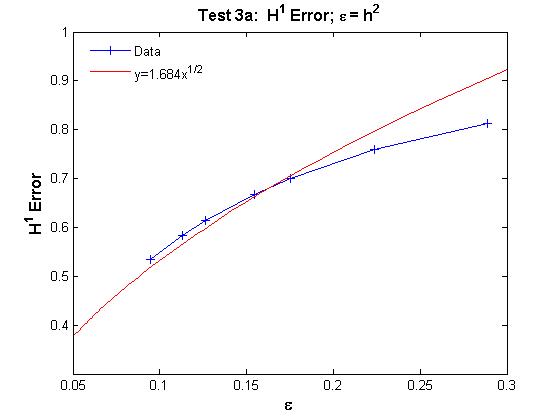}
\includegraphics[angle=0,width=6.25cm,height=5cm]{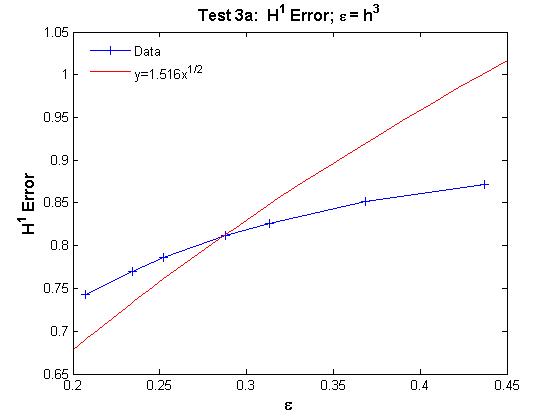}\end{center}
\caption{\label{fig3}{\scriptsize Test 3a.  $H^1$ Error}}\end{figure}

\begin{figure}[h]\begin{center}
\includegraphics[angle=0,width=6.25cm,height=5cm]{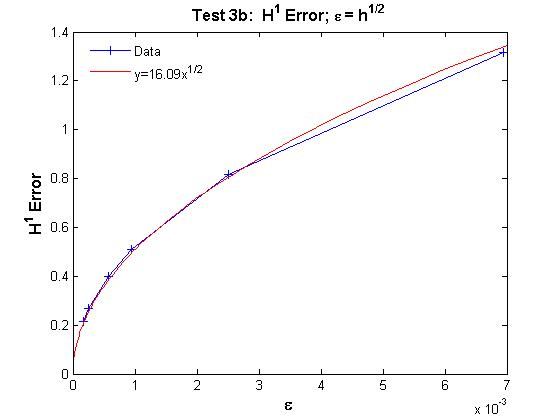}
\includegraphics[angle=0,width=6.25cm,height=5cm]{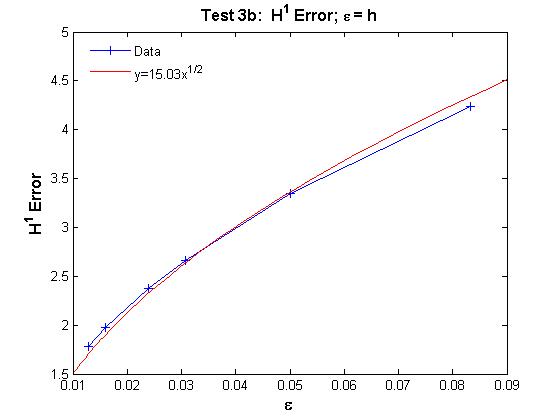}\\
\includegraphics[angle=0,width=6.25cm,height=5cm]{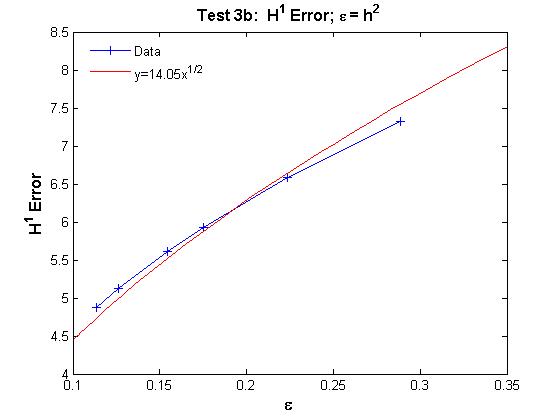}
\includegraphics[angle=0,width=6.25cm,height=5cm]{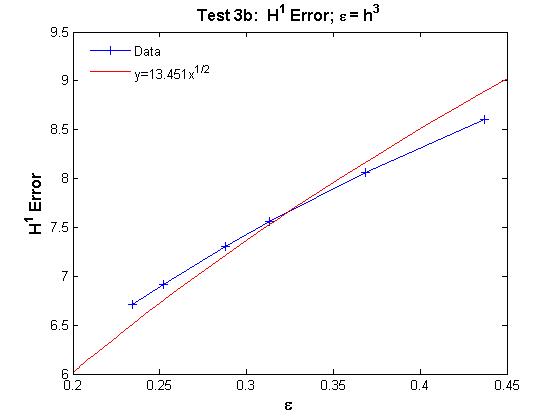}\end{center}
\caption{\label{fig4}{\scriptsize Test 3b. $H^1$ Error}}\end{figure}

\begin{figure}[h]\begin{center}
\includegraphics[angle=0,width=6.25cm,height=5cm]{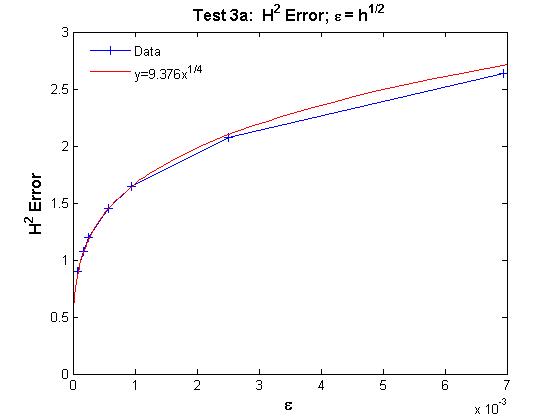}
\includegraphics[angle=0,width=6.25cm,height=5cm]{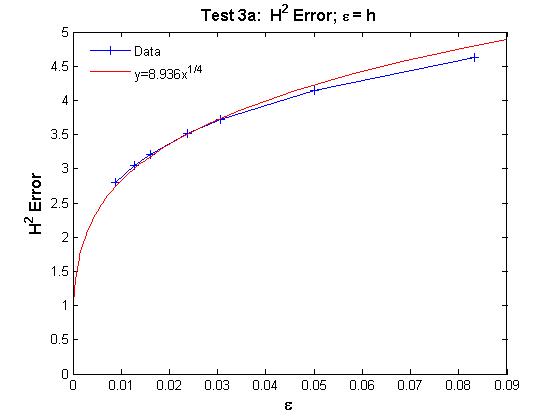}\\
\includegraphics[angle=0,width=6.25cm,height=5cm]{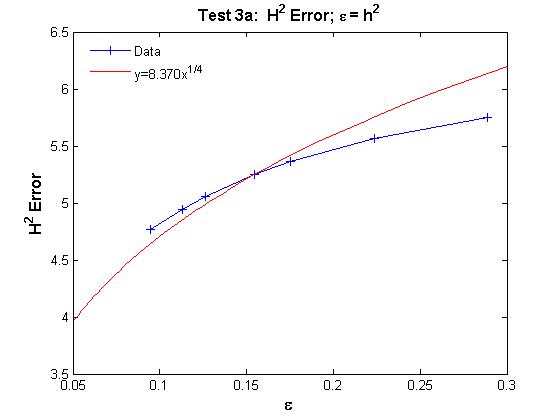}
\includegraphics[angle=0,width=6.25cm,height=5cm]{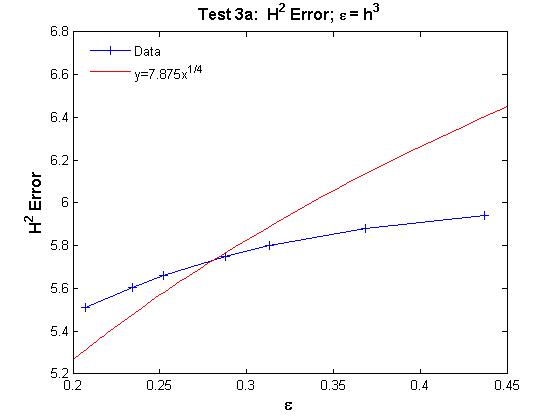}\end{center}
\caption{\label{fig5}{\scriptsize Test 3a. $H^2$ Error}}\end{figure}

\begin{figure}[h]\begin{center}
\includegraphics[angle=0,width=6.25cm,height=5cm]{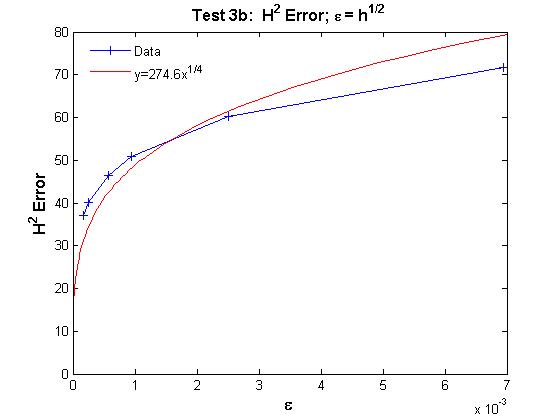}
\includegraphics[angle=0,width=6.25cm,height=5cm]{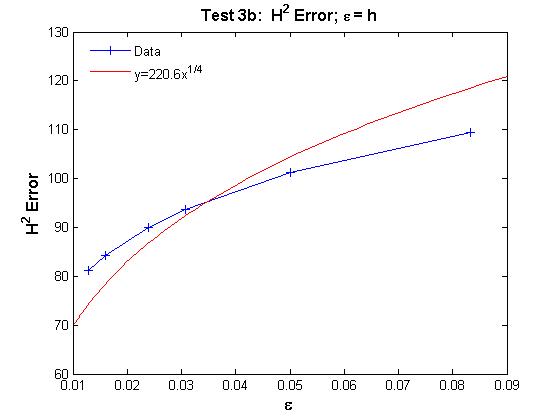}\\
\includegraphics[angle=0,width=6.25cm,height=5cm]{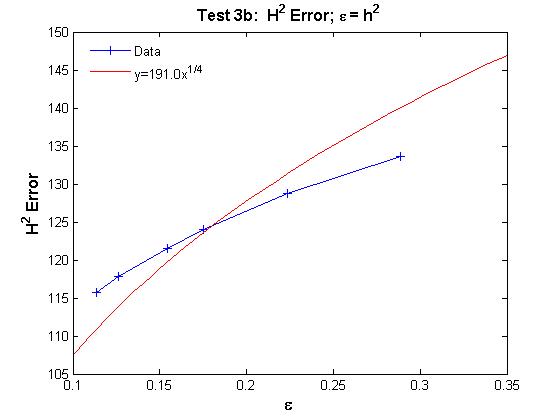}
\includegraphics[angle=0,width=6.25cm,height=5cm]{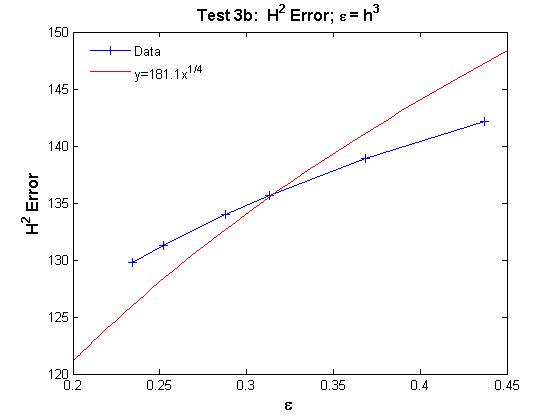}\end{center}
\caption{\label{fig6}{\scriptsize Test 3b.  $H^2$ Error}}
\end{figure}

\end{document}